\newcounter{mathcounter}
\newenvironment{definition}{\refstepcounter{mathcounter} \begin{trivlist} \item[\hskip \labelsep {\bfseries Definition \arabic{mathcounter}.\enspace}] \it}{\end{trivlist}}
\newenvironment{lemma}{\refstepcounter{mathcounter} \begin{trivlist} \item[\hskip \labelsep {\bfseries Lemma \arabic{mathcounter}.\enspace}] \it}{\end{trivlist}}
\newenvironment{corollary}{\refstepcounter{mathcounter} \begin{trivlist} \item[\hskip \labelsep {\bfseries Corollary \arabic{mathcounter}.\enspace}]}{\end{trivlist}}
\newenvironment{theorem}{\refstepcounter{mathcounter} \begin{trivlist} \item[\hskip \labelsep {\bfseries Theorem \arabic{mathcounter}.\enspace}]}{\end{trivlist}}
\newcommand{\N}{\mathbb{N}}
\newcommand{\R}{\mathbb{R}}
\newcommand{\Normal}{\mathcal{N}}
\newcommand{\Expectation}{\mathbb{E}}
\newcommand{\Order}{\mathcal{O}}
\newcommand{\GL}{\mathrm{GL}}
\newcommand{\SL}{\mathrm{SL}}
\newcommand{\SLpm}{\mathrm{SL^{\pm}}}
\DeclareMathOperator{\tr}{tr}
\newcommand{\bmat}{\begin{pmatrix}}
\newcommand{\emat}{\end{pmatrix}}
\title{Convergence Analysis of the\\Hessian Estimation Evolution Strategy}
\author{
	Tobias Glasmachers\\
	Institute for Neural Computation, Ruhr-University Bochum, Germany\\
	\texttt{tobias.glasmachers@ini.rub.de}\\[0.5em]
	Oswin Krause\\
	Department of Computer Science, University of Copenhagen, Denmark\\
	\texttt{oswin.krause@di.ku.dk}
}
\date{}
\begin{document}

\maketitle

\setlength{\absleftindent}{2cm}
\setlength{\absrightindent}{2cm}

\begin{abstract}
The class of algorithms called \emph{Hessian Estimation Evolution
Strategies (HE-ESs)} update the covariance matrix of their sampling
distribution by directly estimating the curvature of the objective
function. The approach is practically efficient, as attested by
respectable performance on the BBOB testbed, even on rather irregular
functions.

In this paper we formally prove two strong guarantees for the
(1+4)-HE-ES, a minimal elitist member of the family: stability of the
covariance matrix update, and as a consequence, linear convergence on
all convex quadratic problems at a rate that is independent of the problem instance.
\end{abstract}

%\begin{keywords}
%evolution strategy, covariance matrix adaptation, linear convergence
%\end{keywords}

%%%%%%%%%%%%%%%%%%%%%%%%%%%%%%%%%%%%%%%%%%%%%%%%%%%%%%%%%%%%
%%
\section{Introduction}
\label{section:introduction}

The theoretical analysis of state-of-the-art variable metric Evolution
Strategies (ESs) is a long-standing open problem in evolutionary
computation. While simple step-size adaptive ESs without Covariance
Matrix Adaptation (CMA) have been analyzed with good success
\citep{jaegerskuepper2006quadratic,akimoto2018drift,morinaga2019generalized},
we are still lacking appropriate tools for rigorously proving stability
and convergence of variable metric methods like CMA-ES \citep{hansen:2001}.

Most theoretical work on the rigorous analysis of evolution strategies
focuses on simple ESs without CMA. Notable early work in this area was
conducted by \citet{jaegerskuepper2006quadratic}, who proved linear
convergence of the (1+1)-ES with $1/5$ success rule on convex quadratic
functions with a progress rate of
    $\Order \left( \frac{1}{d \cdot \kappa(H)} \right)$,
which translates into the runtime growing linearly with problem dimension
$d$ and the problem difficulty. Here, problem difficulty is measured by the
conditioning $\kappa(H)$ (quotient of largest and smallest eigenvalue)
of the Hessian $H$ of a quadratic objective function.
\citet{akimoto2018drift} proved a similar result restricted to the sphere
function but providing explicit runtime bounds with drift theory methods
\citep{doerr2011sharp}. That result was the basis of the much stronger
result of \citet{morinaga2019generalized}, which establishes linear
convergence of the (1+1)-ES on a large (non-parametric) class of problems,
namely on $L$-smooth strongly convex functions.

The analysis of modern variable-metric ESs like CMA-ES and its many
variants is significantly less developed. In particular, no (linear)
convergence guarantees exist, mostly for the lack of proofs of stability
of the CMA update. One significant approach to the problem is the
Information Geometric Optimization (IGO) framework \citep{2017OllivierIGO}.
It allows to interpret the so-called rank-$\mu$ update of CMA-ES as
a stochastic natural gradient step \citep{akimoto2010bidirectional}.
This means that stability and convergence can be established provided the
learning rate is small enough. However, the learning rates used in
practice do not fulfill this condition, and hence establishing stability
remains an open problem.

For non-evolutionary variable-metric methods the situation is mixed.
For example, to the best of our knowledge, there does not exist an
analysis showing that the classic Nelder-Mead simplex algorithm converges
to the minimum of a convex quadratic function at a rate that is independent
of the conditioning number. Restricted results exist in low dimensions
\citep{lagarias2012convergence}. On the other hand, Powell's NEWUOA method
\citep{NEWUOA} can jump straight into the optimum once it has obtained
enough samples to estimate the coefficients of the quadratic function
exactly. The variable metric random pursuit algorithm of
\citet{stich2016variable} is of particular interest in our context, since
it is conceptually close to evolutionary computation methods and at the
same time provides a provably stable update that allows the covariance
matrix to converge to the inverse Hessian.

In this paper we prove the stability of an alternative CMA mechanism,
namely the recently proposed Hessian Estimation Evolution Strategy
(HE-ES). To this end we introduce a minimal elitist variant of HE-ES
and prove monotone convergence of its covariance matrix to a multiple
of the inverse Hessian of a convex quadratic objective function.
Informally speaking, we mean by stability that the covariance
matrix does not drift arbitrarily far away from the inverse Hessian.
Our result is stronger, since we prove that the covariance matrix
converges monotonically to a multiple of the inverse Hessian. As a
consequence we are able to transfer existing results on the convergence
of simple ESs on the sphere function to HE-ES. This way we obtain a
strong guarantee, namely linear convergence of our HE-ES variant at a
rate that is independent of the conditioning number of the problem at
hand.

The paper is organized as follows. We first introduce HE-ES and define
the (1+4)-HE-ES as a minimal elitist variant. This algorithm is the
main subject of our subsequent study. The next step is to show the
stability and the convergence of the HE-ES covariance matrix update
to the inverse Hessian of a quadratic objective function. We finally
leverage the analysis of \cite{morinaga2019generalized} to show linear
convergence of (1+4)-HE-ES at a rate that is independent of the problem
difficulty $\kappa(H)$.

\section{Hessian Estimation Evolution Strategies}

The Hessian Estimation Evolution Strategy (HE-ES) is a recently proposed
variable metric evolution strategy \citep{HEES}. Its main characteristic
is its mechanism for adapting the sampling covariance matrix. In this
section we first present the original algorithm and then introduce a novel
elitist variant.

\subsection{The HE-ES Algorithm}
\label{section:HE-ES}

HE-ES is a modern evolution strategy. It features non-elitist selection,
global weighted recombination, cumulative step-size adaptation, and a
special mechanism for covariance matrix adaptation. Most of these mechanisms
coincide with the design of standard CMA-ES \citep{hansen:2001}. In the
following presentation we focus on the non-standard aspects of the algorithm,
following \citet{HEES}.

In each iteration, HE-ES draws a number of mirrored samples of the form
$x_i^- = m - \sigma \cdot A b_i$ and $x_i^+ = m +  \sigma \cdot A b_i$,
where $\sigma > 0$ is the global step size and $A$ is a Cholesky factor of
the covariance matrix $C = A^T A$. For brevity we write $x_i^\pm$, with
$\pm$ representing either $+$ or $-$. The vectors $b_i$ are drawn
from the multi-variate Gaussian distribution $\Normal(0, I)$. Furthermore,
the vectors $b_i$ are orthogonal, i.e., they fulfill $b_i^T b_j = 0$ for
$i \not= j$. We also consider the normalized \emph{directions}
$\frac{b_i}{\|b_i\|}$ in the following.

The three points $x_i^-, m, x_i^+$ are arranged on a line, and restricted
to each such line, the function values in these points give rise to the
quadratic model
\begin{align*}
    q_i(t) = c + g_i t + \frac{h_i}{2} t^2 \approx f \left( m + t \cdot A \frac{b_i}{\|b_i\|}\right)
\end{align*}
of the objective function. Fitting its coefficients to the function values
yields the offset $c = f(m)$, the gradient
$g_i = \frac{f(x_i^+) - f(x_i^-)}{2 \sigma \|b_i\|}$, and the Hessian
$h_i = \frac{f(x_i^-) + f(x_i^+) - 2 f(m)}{\sigma^2 \|b_i\|^2}$.
The coefficients $h_i$ measure the curvature of the graphs of the quadratic models $q_i$. They are of particular interest in the following.

The intuition behind this construction is as follows: Each $h_i$ is a finite
difference estimate of a
diagonal coefficient of the Hessian matrix $H$. This is strictly true if
$b_i$ is parallel to an axis of the coordinate system. Otherwise, $h_i$
contains exactly the same type of information, but not referring to an
axis and a corresponding diagonal entry, but to an arbitrary direction~$b_i$. Therefore, estimating the moden $q_i$ and $h_i$ in particular allows HE-ES to
obtain curvature information about the problem, and more specifically,
information about the Hessian of a quadratic objective function.

The goal of HE-ES is to adapt its sampling covariance matrix $C$ towards
a multiple of the inverse of the Hessian $H$ of a convex quadratic
objective function
\begin{align}
    f(x) = \frac12 (x - x^*)^T H (x - x^*) + f^*
    \label{eq:objective}
\end{align}
with global optimum $x^*$, optimal value $f^*$, and strictly positive
definite symmetric Hessian $H$.
Its covariance matrix update therefore updates $C$ \emph{in direction
$b_i$} (measured by $\frac{b_i^T}{\|b_i\|} C \frac{b_i}{\|b_i\|}$) towards
a multiple of $H^{-1}$ (measured by
$\alpha \cdot \frac{b_i^T}{\|b_i\|} H^{-1} \frac{b_i}{\|b_i\|}$). This
corresponds to learning a good \emph{shape} of the multi-variate normal
distribution, while we leave learning of its position to the mean update,
and learning of its global scale to the step size update. In other words,
adapting to the (arbitrary) scaling factor $\alpha > 0$ is left to step
size update, which usually operates at a faster time scale (larger learning
rate) than covariance matrix adaptation.

Since the scaling factor $\alpha$ is arbitrary, a meaningful update can
only change different components of $C$ \emph{relative} to each other.
Say, if
\begin{align}
    h_i \cdot \frac{b_i^T}{\|b_i\|} C \frac{b_i}{\|b_i\|} \gg h_j \cdot \frac{b_j^T}{\|b_j\|} C \frac{b_j}{\|b_j\|}
    \enspace,
    \label{eq:imbalance}
\end{align}
then the variance in direction $b_i$ should be reduced while the variance
in direction $b_j$ should be increased. This way, HE-ES keeps the
\emph{scale} of its sampling distribution (measured by $\det(C)$) fixed.
If we fully trust the data and the model, i.e., when minimizing a noise-free
quadratic function, then equalizing left-hand-side and right-hand-side of
inequality~\eqref{eq:imbalance} is the optimal (greedy) update step.

\begin{algorithm}
\caption{Hessian Estimation Evolution Strategy (HE-ES)}
\label{algorithm:HE-ES}
\begin{algorithmic}[1]
\STATE{\textbf{input} $m^{(0)} \in \R^d$, $\sigma^{(0)} > 0$, $A^{(0)} \in \R^{d \times d}$}
\STATE{\textbf{parameters} $\tilde \lambda \in \N$, $c_s$, $d_s$, $w \in \mathbb{R}^{2 \tilde \lambda }$}
\STATE{$B \leftarrow \lceil \tilde \lambda / d \rceil$}
\STATE{$p_s^{(0)} \leftarrow 0 \in \R^d$}
\STATE{$g_s^{(0)} \leftarrow 0$}
\STATE{$t \leftarrow 0$}
\REPEAT
	\FOR{$j \in \{1, \dots, B\}$}
		\STATE{$b_{1j}, \dots, b_{dj} \leftarrow$ \texttt{sampleOrthogonal}()}
	\ENDFOR
	\STATE{$x_{ij}^- \leftarrow m^{(t)} - \sigma^{(t)} \cdot A^{(t)} b_{ij}$ ~~~~~for $i+(j-1)B \leq \tilde \lambda$}
	\STATE{$x_{ij}^+ \leftarrow m^{(t)} + \sigma^{(t)} \cdot A^{(t)} b_{ij}$ ~~~~~for $i+(j-1)B \leq \tilde \lambda$ \hfill \# mirrored sampling}
	\STATE{$A^{(t+1)} \leftarrow A^{(t)} \cdot$ \texttt{computeG}($\{b_{ij}\}$, $f(m)$, $\{f(x_{ij}^\pm)\}$, $\sigma$)} \hfill \# matrix adaptation
    \STATE{$w_{ij}^\pm  \leftarrow w_{\text{rank}(f(x_{ij}^\pm))}$}
	\STATE{$m^{(t+1)} \leftarrow \sum_{ij} w_{ij}^\pm \cdot x_{ij}^\pm$} \hfill \# mean update
	\STATE{$g_s^{(t+1)} \leftarrow (1 - c_s)^2 \cdot g_s^{(t)} + c_s \cdot (2 - c_s)$} \hfill
	\STATE{$p_s^{(t+1)} \leftarrow (1 - c_s) \cdot p_s^{(t)} + \sqrt{c_s \cdot (2 - c_s) \cdot \mu_\text{eff}^\text{mirrored}} \cdot \sum_{ij} (w_{ij}^+ - w_{ij}^-) \cdot b_{ij}$}
	\STATE{$\sigma^{(t+1)} \leftarrow \sigma^{(t)} \cdot \exp\left( %\frac{c_s}{d_s} \cdot \frac{\|p_s^{(t+1)}\|}{\chi_d} - \sqrt{g_s^{(t+1)}}
	\frac{c_s}{d_s} \cdot \left[ \frac{\|p_s^{(t+1)}\|}{\chi_d} - \sqrt{g_s^{(t+1)}} \right]
	\right)$} \hfill \# CSA
	\STATE{$t \leftarrow t + 1$}
\UNTIL{ \textit{stopping criterion is met} }
\end{algorithmic}
\end{algorithm}

\begin{algorithm}
\caption{sampleOrthogonal}
\label{procedure:sampleOrthogonal}
\begin{algorithmic}[1]
\STATE{\textbf{input} dimension $d$}
\STATE{$z_1, \dots, z_d \sim \Normal(0, I)$}
\STATE{$n_1, \dots, n_d \leftarrow \|z_1\|, \dots, \|z_d\|$}
\STATE{apply the Gram-Schmidt procedure to $z_1, \dots, z_d$}
\STATE{return $y_i = n_i \cdot z_i, \quad i = 1, \dots, d$}
\end{algorithmic}
\end{algorithm}

\begin{algorithm}
\caption{computeG}
\label{procedure:computeG}
\begin{algorithmic}[1]
\STATE{\textbf{input} $b_{ij}$, $f(m)$, $f(x_{ij}^\pm)$, $\sigma$}
\STATE{\textbf{parameters} $\kappa$, $\eta_A$}
\STATE{$h_{ij} \leftarrow \frac{f(x_{ij}^+) + f(x_{ij}^-) - 2 f(m)}{\sigma^2 \cdot \|b_{ij}\|^2}$ \hfill \# estimate curvature along $b_{ij}$}
\STATE{\textbf{if} $\max(\{h_{ij}\}) \leq 0$ \textbf{then} \textbf{return} $I$}
\STATE{$c \leftarrow \max(\{h_{ij}\}) / \kappa$}
\STATE{$h_{ij} \leftarrow \max(h_{ij}, c)$ \hfill \# truncate to trust region}
\STATE{$q_{ij} \leftarrow \log(h_{ij})$}
\STATE{$q_{ij} \leftarrow q_{ij} - \frac{1}{\tilde \lambda} \cdot \sum_{ij} q_{ij}$ \hfill \# subtract mean $\to$ ensure unit determinant}
\STATE{$q_{ij} \leftarrow q_{ij} \cdot \frac{-\eta_A}{2}$ \hfill \# learning rate and inverse square root (exponent $-1/2$)}
\STATE{$q_{i,B} \leftarrow 0 \quad \forall i \in \{d B - \tilde \lambda, \dots, d\}$ \hfill \# neutral update in the unused directions}
\STATE{\textbf{return} $\frac{1}{B} \sum_{ij} \frac{\exp(q_{ij})}{\|b_{ij}\|^2} \cdot b_{ij} b_{ij}^T$}
\end{algorithmic}
\end{algorithm}

Algorithm~\ref{algorithm:HE-ES} provides an overview of the resulting HE-ES
algorithm. It is designed to be conceptually close to CMA-ES, using
multi-variate Gaussian samples and cumulative step-size adaptation (CSA, \citealt{hansen:2001}). One difference is the use of orthogonal mirrored samples
(see algorithm~\ref{procedure:sampleOrthogonal}). If there are more directions
than dimensions (the poulation size $\lambda$ exceeds $2d$) then multiple
independent blocks of orthogonal samples are used. The core update mechanism
discussed above is realized in algorithm~\ref{procedure:computeG}, applied to
the Cholesky factor $A$ of the covariance matrix $C = A^T A$. Since practical
objective functions are hardly exactly quadratic, the algorithm dampens update
steps with a learning rate and limits the impact of non-positive curvature
estimates ($h_i \leq 0$). A further notable property of HE-ES is its correction
for mirrored sampling in CSA, which removes a bias that is otherwise present in
the method \citep{HEES}. We do not discuss these additional mechanisms in
detail, since they do not play a role in the subsequent analysis.

It was demonstrated by \citet{HEES} that HE-ES shows excellent performance
on many problems, including some highly rugged and non-convex functions,
which strongly violate the assumption of a quadratic model. However, for
the sake of a tractable analysis, we restrict ourselves to objective
functions of the form given in equation~\eqref{eq:objective}. In general,
quadratic functions should not be optimized with HE-ES; for example, NEWUOA
is a more suitable method for this type of problem.
The relevance of the function class lies in the fact that in the late phase
of convergence, every twice continuously differentiable objective function is
well approximated by its second order Taylor polynomial around the optimum,
which is of the form~\eqref{eq:objective}.

\subsection{A Minimal Elitist HE-ES}

\begin{algorithm}
\caption{(1+4)-HE-ES}
\label{algorithm:elitist}
\begin{algorithmic}[1]
\STATE{\textbf{input} $m^{(0)} \in \R^d$, $\sigma^{(0)} > 0$, $A^{(0)} \in \R^{d \times d}$, $c_{\sigma} > 1$}
\STATE{$t \leftarrow 0$}
\REPEAT
	\STATE{$b_{1}, \dots, b_{d} \leftarrow$ \texttt{sampleOrthogonal}()}
	\STATE{$x_{i}^- \leftarrow m^{(t)} - \sigma^{(t)} \cdot A^{(t)} b_{i}$ ~~~~~for $i \in \{1, 2\}$}
	\STATE{$x_{i}^+ \leftarrow m^{(t)} + \sigma^{(t)} \cdot A^{(t)} b_{i}$ ~~~~~for $i \in \{1, 2\}$ \hfill \# mirrored sampling}
	\STATE{$f_{i}^\pm \leftarrow f(x_{i}^\pm)$ \hfill \# evaluate the four offspring}
	\STATE{$A^{(t+1)} \leftarrow A^{(t)} \cdot$ \texttt{computeG}($\{b_{i}\}$, $f(m^{(t)})$, $\{f_{i}^\pm\}$, $\sigma^{(t)}$)} \hfill \# matrix adaptation
	\IF{$f_1^+ \leq f(m^{(t)})$}
		\STATE{$m^{(t+1)} \leftarrow x_1^+$} \hfill \# mean update using the first sample
		\STATE{$\sigma^{(t+1)} \leftarrow \sigma^{(t)} \cdot c_{\sigma}$} \hfill \# increase step size (1/5 rule)
	\ELSE
		\STATE{$\sigma^{(t+1)} \leftarrow \sigma^{(t)} \cdot c_{\sigma} ^ {-1/4}$} \hfill \# decrease step size (1/5 rule)
	\ENDIF
	\STATE{$t \leftarrow t + 1$}
\UNTIL{ \textit{stopping criterion is met} }
\end{algorithmic}
\end{algorithm}

In this section we design a minimal variant of the HE-ES family. For the
sake of a tractable analysis, we aim at simplicity in the algorithm design, and at mechanisms that allow us to leverage existing analysis techniques, but without losing the main characteristics of a variable-metric ES, and of course without changing the covariance matrix adaptation principle.
Several similarly reduced models exists for CMA-ES, for example the
(1+1)-CMA-ES \citep{igel2007covariance}, natural evolution strategies (NES)
\citep{wierstra2014natural}, and the matrix-adaptation ES (MA-ES) of
\citet{beyer2017simplify}. HE-ES already implements most of the simplifying
elements of MA-ES. Our main means of breaking down the algorithm therefore
is to design an elitist variant.

For HE-ES, a naive (1+1) selection scheme is not meaningful, for two reasons:
mirrored samples always come in pairs, and HE-ES always needs to sample
at least two directions, so it can assess \emph{relative} curvatures.
Therefore, the minimal scheme proposed here is the (1+4)-HE-ES. In each
generation, it draws two random orthogonal directions and generates four
mirrored samples. To keep the algorithm as close as possible to the (1+1)-ES used by \cite{akimoto2018drift} and \cite{morinaga2019generalized}, we will only consider one sample for updating $m^{(t)}$ and $\sigma^{(t)}$ and use a variant of the classic $1/5$-rule
\citep{rechenberg1973evolutionsstrategie,kern:2004}. Thus the 3 additional samples drawn in each iteration are only used for updating $A^{(t)}$. Removing line~8 (the covariance matrix update) of Algorithm~\ref{algorithm:elitist} and fixing $A^{(0)}=I$ leads to what we refer to as the (1+1)-ES.

The resulting (1+4)-HE-ES is given in algorithm~\ref{algorithm:elitist}.
We find its adaptation behavior to be comparable to the full HE-ES on
convex quadratic problems. Due to its minimal population size it cannot
implement an increasing population (IPOP) scheme, which limits its
performance on highly multi-modal problems. However, it otherwise
successfully maintains the character of the full HE-ES algorithm.

In the subsequent analysis we focus on noise-free convex quadratic objective
functions. In this situation algorithm~\ref{procedure:computeG} is
simplified as follows: the check for a negative definite Hessian in line~4
can be dropped. Equally well, the trust region mechanism in lines 5 and 6
is superfluous. Finally, we can afford a learning rate of $\eta_A = 1$. With
$h_1$ and $h_2$ as defined in line~3, we find that the simplified
algorithm returns the matrix
\begin{align}
    G = I + \left( \sqrt[4]{\frac{h_1}{h_2}} - 1 \right) \frac{b_1 b_1^T}{\|b_1\|^2} + \left( \sqrt[4]{\frac{h_2}{h_1}} - 1 \right) \frac{b_2 b_2^T}{\|b_2\|^2}
    \enspace,
    \label{eq:G}
\end{align}
where $I$ is the identity matrix. The update modifies the factor $A$ only
in directions $b_1$ and $b_2$ and leaves the orthogonal subspace unchanged.

\subsection{Relation to other Algorithms}
There are a few approaches in the literature that adapt the covariance matrix based on Hessian information. Most closely related to our approach are variable-metric random pursuit algorithms by \citet{stich2016variable}. Here, a search-direction $b_1$ is sampled uniformly on a sphere with radius $\lVert b_1\rVert = \epsilon$ and the matrix is updated as:
$$
    C^{(t+1)} = C^{(t)} + \left( h_1 -  \frac{b_1^T C^{(t)} b_1}{\lVert b_1\rVert^2}\right) \frac{b_1 b_1^T}{\lVert b_1\rVert^2}
    \enspace.
    \label{eq:RP}
$$
It is easy to show that for this update holds
$$
    \frac{b_1^T C^{(t+1)} b_1}{\lVert b_1\rVert^2} = h_1\enspace,
$$
i.e., the update learns the exact curvature of the problem in direction $b_i$, assuming that $\epsilon$ is small enough or the function is quadratic.

Another relevant algorithm is BOBYQA \citep{powell2009bobyqa}. Instead of using local curvature approximation, the algorithm keeps track of a set of $m$ points $x_i$, $i=1,\dots,m$ with function values $f(x_i)$. In each iteration, the algorithm estimates the Hessian $\hat H^{(t+1)}=(C^{(t+1)})^{-1}$ by minimizing 

\begin{align}
    & \min_{c, g, \hat H^{(t+1)}} \lVert \hat H^{(t+1)} -\hat H^{(t)} \rVert_F\\
    & \text{s.t.}\quad \frac 1 2 x_i^T \hat H^{(t+1)}x_i + g^T x_i + c=  f(x_i),\; i=1,\dots,m
\end{align}
Thus, it fits a quadratic function on the selected points under the condition that the approximation $\hat H$ of the Hessian is as similar as possible to the one used in the previous iteration. Given a set of $m=(n+1)(n+2)/2$ points on a quadratic function, the algorithm is capable of learning the exact Hessian.

In contrast to our proposed method, both mentioned algorithms do not constrain the covariance matrix or the Hessian matrix to be positive definite. While \citet{stich2016variable} handle the case that an update can lead to a non-zero eigenvalue, they still assume that the correct estimate of the curvature is positive. Thus, a negative curvature of the underlying function can lead to a break-down of the method.
In contrast, BOBYQA allows for negative curvature and instead of sampling from a normal distribution, a trust-region problem is solved.

\section{Stability and Convergence of the Covariance Matrix}

In the following we consider the (1+4)-HE-ES as introduced in the previous
section. Our aim is to show the stability and the monotonic convergence of
its covariance matrix to a multiple of the inverse Hessian of a convex
quadratic function.

We use the following notation. Let $m \in \R^d$, $\sigma > 0$, and
$A \in \SLpm(d, \R)$ denote the parameters of the current sampling
distribution $\Normal(m, \sigma^2 C)$ with $C = A^T A$. Here
$\SLpm(d, \R)$ denotes the group of $d \times d$ matrices with determinant
$\pm 1$, which is closely related to the special linear group $\SL(d, \R)$.
We obtain $\det(C) = 1$, hence the covariance matrix $C \in \SL(d, \R)$ is
an element of the special linear group In the following, we assume $d \geq 2$.

In order to clarify the goals of this section we start by defining
stability and convergence of the covariance matrix.

\begin{definition}
\label{def:stability-convergence}
Consider the space of positive definite symmetric $d \times d$ matrices,
equipped with a pre-metric $\delta$ (a symmetric, non-negative function
fulfilling $\delta(x, x) = 0$). Let $(C_t)_{t \in \N}$ be a sequence of
matrices, and let $R$ denote a reference matrix. We define the scale-invariant
distance $\delta_R(C) = \min\limits_{s > 0} \delta(s \cdot C, R)$ of $C$
from~$R$.
\begin{enumerate}
\item
We call the sequence $(C_t)_{t \in \N}$ \emph{stable up to scaling} if there
exist constants $t_0$ and $\varepsilon > 0$ such that
$\delta_R(C_t) < \varepsilon$ for all $t > t_0$.
\item
We say that $(C_t)_{t \in \N}$ \emph{converges to $R$ up to scaling} if
$\lim\limits_{t \to \infty} \delta_R(C_t) = 0$.
\item
We call the convergence \emph{monotonic} if $t \mapsto \delta_R(C_t)$ is a
monotonically decreasing sequence.
\end{enumerate}
\end{definition}
It is obvious that (monotonic) convergence up to scaling implies stability up
to scaling for all $\varepsilon > 0$. In the following, the reference matrix
is always the inverse Hessian~$H^{-1}$.

\subsection{Invariance Properties}
\label{section:invariance}

In this section we formally establish the invariance properties of
HE-ES. The analysis is not specific to a particular variant and hence
applies also to the (1+4)-HE-ES. We start by showing that the HE-ES is invariant to affine transformations of the search space.

\begin{lemma}
\label{lemma:invariance-X}
Let $g(x) = M x + b$ be an invertible affine transformation.
Consider the state trajectory
\begin{align}
	\left(m^{(t)}, \sigma^{(t)}, A^{(t)}\right)_{t \in \N}
	\label{eq:statesA}
\end{align}
of HE-ES or (1+4)-HE-ES
applied to the objective function $f$, and alternatively the state
trajectory
\begin{align}
	\left(\tilde m^{(t)}, \tilde \sigma^{(t)}, \tilde A^{(t)}\right)_{t \in \N}
	\label{eq:statesB}
\end{align}
of the same algorithm with initial state
\begin{align}
	\left(\tilde m^{(0)}, \tilde \sigma^{(0)}, \tilde A^{(0)}\right) = \left(g(m^{(0)}), \sigma^{(0)}, M A^{(0)}\right)
	\label{eq:initialB}
\end{align}
applied to the objective function $\tilde f(x) = f\big(g^{-1}(x)\big)$.
Assume further, that both algorithms use the same sequence of random vectors $\left(b_{1,1}^{(t)}, \dots, b_{B,d}^{(t)}\right)_{t \in \N}$.
Then it holds that
\begin{align}
	\left(\tilde m^{(t)}, \tilde \sigma^{(t)}, \tilde A^{(t)}\right) = \left(g(m^{(t)}), \sigma^{(t)}, M A^{(t)}\right)
	\label{eq:statement}
\end{align}
for all $t \in \N$.
\end{lemma}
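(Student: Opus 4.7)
The proof is by induction on the iteration index $t$. The base case $t=0$ is exactly the initial condition~\eqref{eq:initialB}. For the inductive step I assume \eqref{eq:statement} at iteration $t$ and prove it at iteration $t+1$. Since both runs are driven by the same sequence of random vectors $b_{ij}^{(t)}$, the only content of the step is to chase the transformation $x \mapsto Mx+b$ through each line of Algorithm~\ref{algorithm:HE-ES}/\ref{algorithm:elitist} and verify that (i) all sampled points in the tilde-run are the $g$-images of the corresponding points in the original run, (ii) all evaluated function values agree, and (iii) the three state components update consistently.

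\textbf{Transport of samples and function values.} From the inductive hypothesis,
\begin{align*}
\tilde x_{ij}^{\pm} = \tilde m^{(t)} \pm \tilde\sigma^{(t)}\,\tilde A^{(t)} b_{ij}^{(t)} = g(m^{(t)}) \pm \sigma^{(t)}\,M A^{(t)} b_{ij}^{(t)} = M x_{ij}^{\pm} + b = g(x_{ij}^{\pm}),
\end{align*}
and therefore $\tilde f(\tilde x_{ij}^{\pm}) = f(g^{-1}(g(x_{ij}^{\pm}))) = f(x_{ij}^{\pm})$, and similarly $\tilde f(\tilde m^{(t)}) = f(m^{(t)})$. This is the workhorse observation; once it is in place, everything else reduces to the fact that the updates depend on the state only through the two quantities that are manifestly invariant: the raw $b_{ij}^{(t)}$ (shared by assumption) and the function-value multi-set $\{f(m^{(t)}),\,f(x_{ij}^{\pm})\}$.

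\textbf{Invariance of each update.} For \texttt{computeG}, the curvature estimates
\[
\tilde h_{ij} = \frac{\tilde f(\tilde x_{ij}^+)+\tilde f(\tilde x_{ij}^-) - 2\tilde f(\tilde m^{(t)})}{(\tilde\sigma^{(t)})^2\,\|b_{ij}^{(t)}\|^2} = h_{ij}
\]
coincide with the originals, so the returned matrix $G$ is identical in both runs. Consequently $\tilde A^{(t+1)} = \tilde A^{(t)} G = M A^{(t)} G = M A^{(t+1)}$. For the mean update and step-size update of (1+4)-HE-ES, the conditional $f_1^+ \le f(m^{(t)})$ triggers in the tilde-run iff it triggers in the original, giving $\tilde m^{(t+1)} = g(x_1^+) = g(m^{(t+1)})$ and $\tilde\sigma^{(t+1)} = \sigma^{(t+1)}$. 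For the full HE-ES, the weighted recombination is invariant because $w_{ij}^\pm$ depends only on the rank of $f(x_{ij}^\pm)$, which is preserved; the evolution path $p_s$ updates using the same $b_{ij}^{(t)}$ and the same (rank-based) weights, so $\tilde p_s^{(t+1)} = p_s^{(t+1)}$ and $\tilde g_s^{(t+1)} = g_s^{(t+1)}$, yielding $\tilde\sigma^{(t+1)} = \sigma^{(t+1)}$ from the CSA formula.

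\textbf{Main obstacle.} There is no genuine mathematical difficulty; the challenge is purely bookkeeping and ensuring that every quantity that enters an update is shown to be invariant before invoking it. The most delicate points to state carefully are (a) that \texttt{computeG} is a function of the $b_{ij}^{(t)}$, $\sigma^{(t)}$ and function values alone (so that it does \emph{not} depend on $A^{(t)}$ directly, which would otherwise break the argument), and (b) that the mean update $\tilde m^{(t+1)} = \sum \tilde w_{ij}^\pm \tilde x_{ij}^\pm$ equals $g(m^{(t+1)})$, which follows because $g$ is affine and the weights sum to $1$ (or, for (1+4)-HE-ES, because a single selected offspring trivially satisfies $g(x_1^+)=\tilde x_1^+$). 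Completing the induction then yields \eqref{eq:statement} for all $t\in\N$.
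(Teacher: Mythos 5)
Your proposal is correct and follows essentially the same inductive argument as the paper: transport the samples via $\tilde x_{ij}^{\pm} = g(x_{ij}^{\pm})$, conclude equality of function values and hence of the \texttt{computeG} output, and then verify the mean and step-size updates component by component (the paper likewise notes that both CSA and the $1/5$-rule are rank-based, and that the recombination weights form a convex combination so that the affine map $g$ commutes with the mean update). Your remark that the weights must sum to $1$ for this last step is a detail the paper uses implicitly; otherwise the two proofs coincide.
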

\begin{proof}
The straightforward proof is inductive. The base case $t=0$ holds by
assumption, see equation~\eqref{eq:initialB}. Assume that the
assertion in equation~\eqref{eq:statement} holds for some value of $t$.
In iteration $t$ the HE-ES and (1+4)-HE-ES applied to $\tilde f$ generate the
offspring
\begin{align}
	\tilde x_i^{\pm} \,
	&= \tilde m^{(t)} \pm \tilde \sigma^{(t)} \cdot \tilde A^{(t)} b_i^{(t)} \label{eq:samples} \\
	&= g(m^{(t)}) \pm \sigma^{(t)} \cdot M A^{(t)} b_i^{(t)} \notag \\
	&= g \left( m^{(t)} \pm \sigma^{(t)} \cdot A^{(t)} b_i^{(t)} \right) \notag \\
	&= g(x_i^{\pm}) \notag
	\enspace.
\end{align}
This identity immediately implies
\begin{align}
	\tilde f(\tilde x_i^{\pm})
	= f\left(g^{-1}(\tilde x_i^{\pm})\right)
	= f\Big(g^{-1}\left(g(x_i^{\pm})\right)\Big)
	= f(x_i^{\pm})
	\enspace,
	\label{eq:sameranks}
\end{align}
as well as $\tilde f(\tilde m^{(t)}) = f(m^{(t)})$, with the same logic.
Therefore the procedure \texttt{computeG} is called by both algorithms
with the exact same parameters and we hence obtain the same
matrix $G$ for the original and for the transformed problem. We conclude
\begin{align*}
	\tilde A^{(t+1)} = \tilde A^{(t)} G = M A^{(t)} G = M A^{(t+1)}
	\enspace.
\end{align*}
Due to equation~\eqref{eq:sameranks} it holds that
$m^{(t+1)} = m^{(t)} \Leftrightarrow \tilde m^{(t+1)} = \tilde m^{(t)}$.
If the means change then they are replaced with the point $\tilde x_1^+$ for (1+4)-ES, and with convex combinations
$\sum_i w_i x_i^\pm$ and $\sum_i w_i \tilde x_i^\pm$ for the original
non-elitist HE-ES. Obviously, the first case is a special case of the
second one. Hence it holds that
\begin{align*}
	\tilde m^{(t+1)} = \sum_i w_i \tilde x_i^\pm = \sum_i w_i g(x_i^\pm) = g(m^{t+1})
\end{align*}
according to equation~\eqref{eq:samples}. Equation~\eqref{eq:sameranks}
also guarantees that the step sizes are multiplied with the same factor
$\delta$, since both CSA and the $1/5$-rule are rank-based methods.
We obtain
\begin{align*}
	\tilde \sigma^{(t+1)} = \delta \cdot \tilde \sigma^{(t)} = \delta \cdot \sigma^{(t)} = \sigma^{(t+1)}
	\enspace.
\end{align*}
We have shown that all three components of the tuples in
equation~\eqref{eq:statement} coincide for $t+1$.
\end{proof}
Affine invariance is an important property for handling non-separable
ill-conditioned problems. HE-ES shares this invariance property with
CMA-ES.

Next we turn to invariance to transformations of objective function
values. A significant difference between HE-ES and CMA-ES is that the
former is not invariant to monotonically increasing transformations
of fitness values, while the latter is: let $h : \R \to \R$ be a strictly
monotonically increasing function, then CMA-ES minimizes $h \circ f$ the
same way as $f$. HE-ES has the same property only for affine
transformations $h(t) = a t + b$, $a > 0$. It can be argued that in many
situations a first order Taylor approximation (which is affine) of the
transformation is good enough, but it is understood that this argument
has limitations.
Affine invariance of function values is formalized by the following
lemma.

\begin{lemma}
\label{lemma:invariance-f}
Consider the state trajectory
\begin{align}
	\left(m^{(t)}, \sigma^{(t)}, A^{(t)}\right)_{t \in \N}
\end{align}
of HE-ES or (1+4)-HE-ES
applied to the objective function $f$, and alternatively the state
trajectory
\begin{align}
	\left(\tilde m^{(t)}, \tilde \sigma^{(t)}, \tilde A^{(t)}\right)_{t \in \N}
\end{align}
of the same algorithm with initial state
\begin{align}
	\left(\tilde m^{(0)}, \tilde \sigma^{(0)}, \tilde A^{(0)}\right) = \left(m^{(0)}, \sigma^{(0)}, A^{(0)}\right)
\end{align}
applied to the objective function $\tilde f(x) = a \cdot f(x) + b$,
$a > 0$. Then it holds that
\begin{align}
	\left(\tilde m^{(t)}, \tilde \sigma^{(t)}, \tilde A^{(t)}\right) = \left(m^{(t)}, \sigma^{(t)}, A^{(t)}\right)
\end{align}
for all $t \in \N$.
\end{lemma}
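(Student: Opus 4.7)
My plan is to mirror the inductive structure of Lemma~\ref{lemma:invariance-X}. The base case $t=0$ holds by assumption. For the inductive step, since the assumed equality of the state tuples at time~$t$ gives $\tilde m^{(t)} = m^{(t)}$, $\tilde \sigma^{(t)} = \tilde \sigma^{(t)}$, $\tilde A^{(t)} = A^{(t)}$, and both runs share the same random vectors $b_i^{(t)}$, the sampled offspring $\tilde x_i^\pm$ coincide with $x_i^\pm$. The only nontrivial thing to check is that each of the three algorithmic subroutines (matrix adaptation, mean update, step-size update) produces the same output when fed the function values $\tilde f(x_i^\pm) = a \cdot f(x_i^\pm) + b$ and $\tilde f(m^{(t)}) = a \cdot f(m^{(t)}) + b$ instead of $f(x_i^\pm)$ and $f(m^{(t)})$.

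The core step, and the only one that is not immediate from rank-preservation, is to verify that \texttt{computeG} is invariant under the affine transformation $y \mapsto a y + b$ of its fitness inputs. First I would compute
\begin{align*}
    \tilde h_i = \frac{\tilde f(x_i^+) + \tilde f(x_i^-) - 2 \tilde f(m^{(t)})}{\sigma^2 \|b_i\|^2} = a \cdot h_i,
\end{align*}
so the offset $b$ cancels at the finite-difference level, and the curvature estimates are merely rescaled by the positive factor $a$. Consequently, the sign check in line~4, the trust-region cutoff $c = \max(\{h_{ij}\})/\kappa$ with $h_{ij} \leftarrow \max(h_{ij}, c)$ in lines 5--6, and the neutral update in line~10 are all equivariant under multiplication by $a>0$. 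Passing through the logarithm converts the scale factor to an additive shift, $\tilde q_{ij} = \log(a) + q_{ij}$, which is then annihilated by the mean-subtraction step $q_{ij} \leftarrow q_{ij} - \frac{1}{\tilde\lambda}\sum q_{ij}$. Hence the centered $q_{ij}$ agree between the two runs, and the returned matrix $G$ is identical, giving $\tilde A^{(t+1)} = \tilde A^{(t)} G = A^{(t)} G = A^{(t+1)}$.

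The remaining updates then follow from the rank-preserving nature of the selection and step-size mechanisms: since $a>0$, the ordering of the fitness values $\tilde f(x_i^\pm)$ matches that of $f(x_i^\pm)$, and likewise the comparison $\tilde f_1^+ \leq \tilde f(\tilde m^{(t)})$ is equivalent to $f_1^+ \leq f(m^{(t)})$. The weighted-recombination mean update in HE-ES and the elitist acceptance rule in (1+4)-HE-ES therefore produce the same new mean, and the evolution path $p_s^{(t+1)}$, the normalization $g_s^{(t+1)}$, and the CSA factor (or the $1/5$-rule factor $c_\sigma^{+1}$ or $c_\sigma^{-1/4}$) all coincide between the two runs. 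Consequently $\tilde m^{(t+1)} = m^{(t+1)}$ and $\tilde \sigma^{(t+1)} = \sigma^{(t+1)}$, completing the induction.

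The main obstacle is really conceptual rather than technical: one must notice that \texttt{computeG} was deliberately designed so that the $\log$-then-mean-subtract trick erases any positive multiplicative factor on the $h_i$'s, and that the subtraction $f(x_i^\pm) - f(m)$ inside $h_i$ erases the additive offset $b$. Everything else is an unsurprising consequence of rank-preservation under strictly increasing affine maps.
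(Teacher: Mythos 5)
Your proposal is correct and follows essentially the same route as the paper's proof: both reduce the claim to checking that \texttt{computeG} is invariant under $y \mapsto a y + b$ (offset cancels in the finite differences, the factor $a$ becomes an additive $\log(a)$ that is removed by the mean subtraction) and dispatch the mean and step-size updates via rank preservation. Your version is slightly more explicit about the induction scaffolding and about the sign check and truncation steps being equivariant under scaling by $a>0$, but the substance is identical.
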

\begin{proof}
Due to $a > 0$ the transformation $h(t) = a t + b$ is strictly
monotonically increasing and hence preserves the order (ranking) of
objective values. HE-ES and its variants are fully rank-based up to
their covariance matrix update. Therefore most operations on $f$ and
$h \circ f$ are exactly the same, even for general strictly monotonic
transformations~$h$. Procedure~\ref{procedure:computeG} needs a closer
investigation. In the curvature estimates $h_{i,j}$ computed in line~3
the offset $b$ cancels out, while the factor $a$ enters linearly. It
also enters linearly into the cutoff threshold $c$ computed in line~5,
and hence in the truncation in line~6. It is then transformed into the
summand $\log(a)$ for $q_{i,j}$ in line~7, which is removed in line~8
when subtracting the mean. We conclude that
Procedure~\ref{procedure:computeG} is invariant to affine
transformations of function values.
\end{proof}

Affine invariance in effect means that it suffices to analyze HE-ES
in an arbitrary coordinate system. For example, setting
$g(x) = A^{-1}(x - m)$ we can transform the problem so that at the
beginning of an iteration it holds that $m = 0$ and $A = C = I$. This
reparameterization trick was first leveraged by
\citet{glasmachers2010xNES} and used by \citet{krause2015xCMAES} and
by \citet{beyer2017simplify}.
Alternatively we can transform the objective function into a simpler
form, as discussed in the next section. In general we cannot achieve
both at the same time.

\subsection{Informal Discussion of the Covariance Matrix Update}

Before we proceed with our analysis, we provide an intuition on the
effect of the covariance matrix update of HE-ES. Consider a general
convex quadratic objective function as given in equation~\eqref{eq:objective},
with symmetric and strictly positive definite Hessian $H$, unique
optimal solution $x^*$ and optimal value $f^*$. Without loss of
generality, applying Lemma~\ref{lemma:invariance-f} with
$h(t) = \det(H)^{-\frac{1}{d}} \cdot (t - f^*)$ we can set $f^* = 0$ and
assume $\det(H) = 1$. Due to Lemma~\ref{lemma:invariance-X} applied with
$g(x) = H^{-1/2}(x - x^*)$ it even suffices to consider $x^* = 0$ and
$H = I$ (the identity matrix), which yields the well-known sphere
function $f(x) = \frac12 \|x\|^2$. In this situation, the ultimate goal
of covariance (or transformation) matrix adaptation is to generate a
sequence $(A^{(t)})_{t \in \N}$ fulfilling
$$ C^{(t)} = (A^{(t)})^T A^{(t)} \underset{t \to \infty}{\longrightarrow} I \enspace, $$
for all initial states $A^{(0)}$.
Due to random fluctuations and a non-vanishing learning rate, CMA-ES
does not fully achieve this goal. Instead its covariance matrix keeps
fluctuating around the inverse Hessian. In contrast, HE-ES actually
achieves the above goal: its covariance matrix converges to the inverse
Hessian, which is hence approximated to arbitrarily high precision.
We note that in practice this difference does not matter, since a
realistic black-box objective function is hardly exactly quadratic.
This improved stability of the update, however, is what makes the
subsequent analysis tractable.

Let $b_1, b_2 \sim \Normal(0, I)$ be the Gaussian random vectors sampled
in the current generation of (1+4)-HE-ES, and define their normalized
counterparts $u_i = b_i / \|b_i\|$. Note that by construction the
directions are orthogonal: $b_1^T b_2 = 0 = u_1^T u_2$. We consider the
four offspring
\begin{align*}
	x_i^+ = m + \sigma \cdot A b_i
	\qquad
	\text{and}
	\qquad
	x_i^- = m - \sigma \cdot A b_i
	\qquad
	\text{for}
	\qquad
	i \in \{1, 2\}
\end{align*}
forming two pairs of mirrored samples.
From the corresponding function values we estimate the curvatures
\begin{align}
	h_i = \frac{f(x_i^+) + f(x_i^-) - 2 f(m)}{\sigma^2 \cdot \|b_i\|^2} = u_i^T A^T H A u_i
	\enspace.
	\label{eq:curvature}
\end{align}
We then extract the update coefficients
\begin{align*}
	\gamma_1 = h_1^{-1/4} h_2^{1/4}
	\qquad
	\text{and}
	\qquad
	\gamma_2 = h_1^{1/4} h_2^{-1/4}
	\enspace.
\end{align*}
The update of the transformation matrix takes the form
\begin{align}
	A' = A \cdot G
	\qquad
	\text{with}
	\qquad
	G = I + \sum_{i=1}^2 (\gamma_i - 1) \cdot u_i u_i^T
	\enspace,
	\label{eq:update}
\end{align}
see also equation~\eqref{eq:G}. Here, $A = A^{(t)}$ is the matrix before
and $A' = A^{(t+1)}$ is the matrix after the update.
The main aim of the subsequent analysis is to explain the effect of this
update in intuitive geometric terms and to derive the guarantee that
$(A^{(t)})^T A^{(t)}$ converges to $H^{-1}$.

Let $V = \R \cdot u_1 + \R \cdot u_2$ denote the two-dimensional
subspace of $\R^d$ spanned by the two sampling directions. We consider the symmetric
rank-two matrix $U = u_1 u_1^T + u_2 u_2^T$. Its eigenspace for eigenvalue $1$ is $V$. For $d > 2$, the orthogonal subspace $V^\perp = \{x \in \R^d \,|\, x^T v = 0 \, \forall v \in V\}$ is non-trivial. It is the eigenspace for eigenvalue $0$. The map
$x \mapsto U x$ is the orthogonal projection onto $V$. For vectors
$x \in V^\perp$ the map $x \mapsto G x$ is the identity. Within $V$ the
update matrix $G$ has eigenvalues $\gamma_i$ with corresponding
eigenvectors~$u_i$. Multiplying the eigenvalues yields $\det(G) = 1$.

\begin{figure}
\begin{center}
\includegraphics[width=\textwidth]{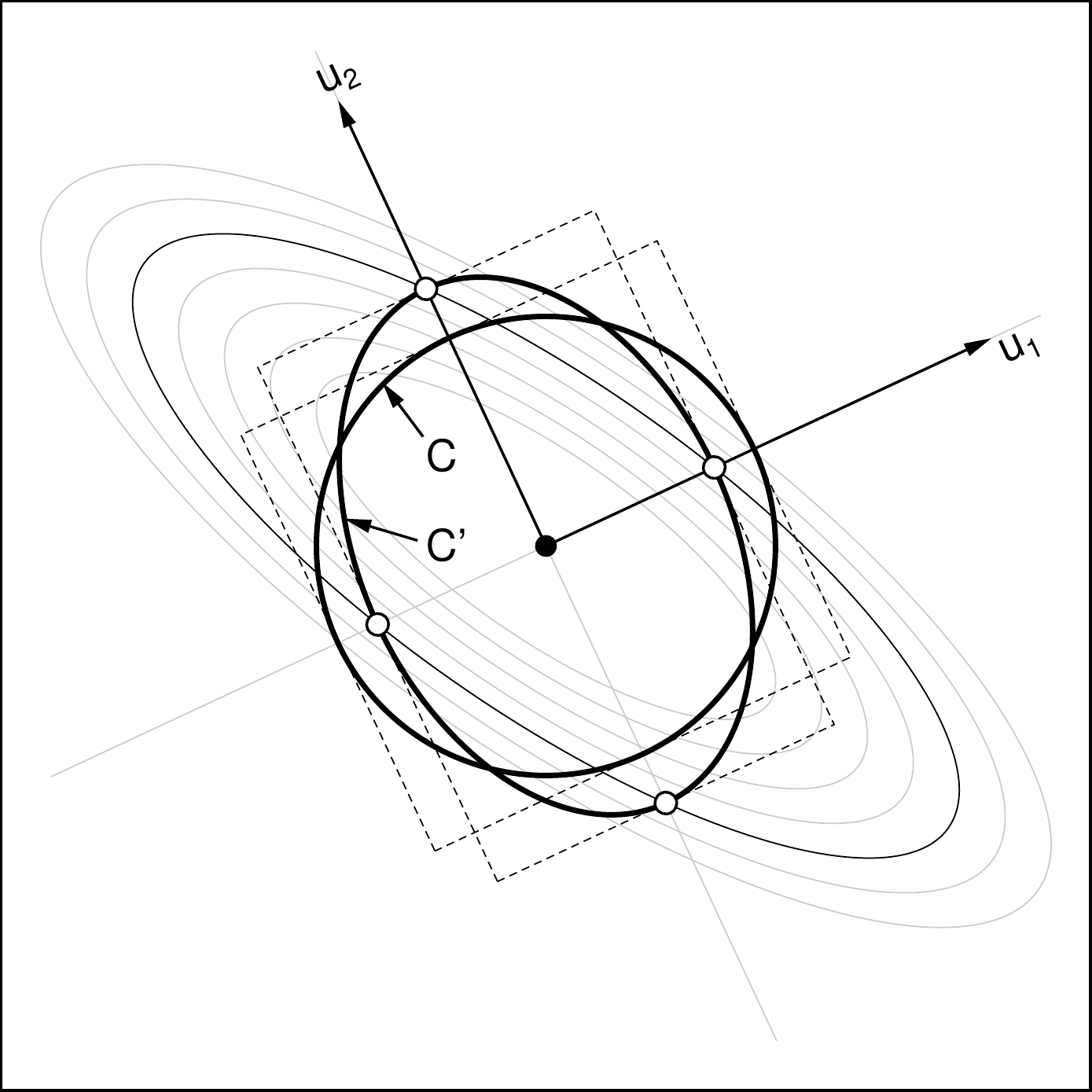}
\caption{\label{figure:update}
    Adaptation of an initially isotropic covariance matrix $C$
    (illustrated as a circular iso-density curve) towards an
    ellipsoidal objective function (family of ellipsoidal level
    lines) in the two-dimensional subspace spanned by $u_1$ and $u_2$.
    The resulting covariance matrix $C'$ exhibits elliptic
    iso-density curves. The update changes $C$ into $C'$ along the
    directions $u_1$ and $u_2$. The different extents of the ellipses
    are illustrated by the dashed bounding boxes, which change from
    an initial square into a rectangle of equal area. The bounding
    box of the iso-density line of $C'$ is defined by the four
    (marked) intersections of the ``coordinate axes'' spanned by
    $u_1$ and $u_2$ with a level set of the objective function.
    It is clearly visible that the update does not learn the
    problem structure in a single step. Still, the resulting
    iso-density curve is closer to the level sets than the original
    iso-density curve. If $u_1$ and $u_2$ happen to be principal axes
    of the level set ellipsoids, then the adaptation is completed
    in a single step.
}
\end{center}
\end{figure}

The intuition behind this update is best explained by simplifying
matters somewhat. To this end assume $H = I$, $h_1 = 1/h_2$ and that
$U A U$ (the transformation $A$ restricted to $V$) has eigenvectors
$u_i$ with corresponding eigenvalues $\lambda_i$: we were lucky to
sample along the eigenvectors of the problem.
Equation~\eqref{eq:curvature} then yields $h_i = \lambda_i^2$ and hence
$\gamma_1 = \sqrt{\lambda_2/\lambda_1}$ and
$\gamma_2 = \sqrt{\lambda_1/\lambda_2}$. Plugging this into
equation~\eqref{eq:update} yields $U A' U = U$. In other words, the
Hessian restricted to $V$ is learned in a single step. In general, if
$h_1$ and $h_2$ do not multiply to unity then only the relative scaling
of the two eigenvalues is corrected. More significantly, if the sampling
directions do not agree with the eigenvectors then the update does not
solve the problem in a single step. Instead, the effect is exactly
analogous to improving the properties of an ill-conditioned matrix---as
seen from the perspective of the sampling directions---with a diagonal
pre-conditioner. In this general situation, the effect of the update is
depicted in figure~\ref{figure:update}.

\subsection{A Preconditioning Perspective}

We have seen above that thanks to affine invariance we can transform any
convex quadratic objective function into the sphere function with
Hessian $H = I$, without loss of generality. We stick to this choice
from here on.

For another informal argument (which will be made precise below) we
restrict the problem to the subspace $V$. In that subspace we use $u_1$
and $u_2$ as coordinate axes, so the components of the following
two-dimensional vectors and matrices refer to that coordinate system.
This immediately implies $u_1 = (1, 0)^T$ and $u_2 = (0, 1)^T$. We then
deal with the $2 \times 2$ matrices
\begin{align*}
	A = \bmat a_{11} & a_{21} \\ a_{12} & a_{22} \emat
	\qquad
	\text{and}
	\qquad
	G = \bmat \gamma_1 & 0 \\ 0 & \gamma_2 \emat.
\end{align*}
It is due to the specific choice of the basis that the matrix $G$ is
diagonal.

It then becomes apparent that at the core of the update the (1+4)-HE-ES
alters the variance in the directions $u_1$ and $u_2$, disregarding the
inherent structure of the covariance matrix (e.g., its eigenbasis).
A related perspective is taken in methods for solving extremely large
linear systems, where the problem can often be simplified through
preconditioning \citep[Chapter~13]{vandervorst2003iterative}.
Changing $A$ into $A' = A \cdot G$ can be understood as a measure for
improving the conditioning of the problem, which is the same as
decreasing the spread of the eigenvalues of $C$. The effect on
$C' = G \cdot C \cdot G$ is two-sided preconditioning with the same
matrix $G$. A diagonal preconditioner $G$ is among the simplest choices.
In our analysis it arises naturally through the very definition of the
(1+4)-HE-ES.

A commonly agreed upon measure of problem hardness and of the spread of
the eigenvalues is the conditioning number, which is the quotient of
largest divided by the smallest eigenvalue. In general, absolute values
of eigenvalues are considered, however, for the covariance matrix $C$
all eigenvalues are positive. Taking this perspective, we would like to show
that the conditioning number of $C' = (A')^T A' = G A^T A G = G C G$ is
smaller than or equal to the conditioning number of $C$, and that it is
strictly smaller most of the time. Sticking to our two-dimensional view
established above we can solve the eigenequation analytically by
finding the zeros of the characteristic polynomial. It holds that
\begin{align*}
	C = A^T A = \bmat a_{11}^2 + a_{12}^2 & a_{11} a_{21} + a_{12} a_{22} \\ a_{11} a_{21} + a_{12} a_{22} & a_{21}^2 + a_{22}^2 \emat =: \bmat c_{11} & c_{12} \\ c_{12} & c_{22} \emat
\end{align*}
and equation~\eqref{eq:curvature} yields $h_i = c_{ii}$. The eigenvalues
of $C$ are the zeros of its characteristic polynomial
\begin{align*}
	p_C(\lambda) \,
	&= \det(\lambda \cdot I - C) \\
	&= (\lambda - c_{11}) (\lambda - c_{22}) - c_{12}^2 \\
	&= \lambda^2 - (c_{11} + c_{22}) \lambda + c_{11} c_{22} - c_{12}^2 \\
	&= \lambda^2 - \tr(C) \lambda + \det(C)
	\enspace.
\end{align*}
We obtain the (real) eigenvalues
$\lambda_{1/2} = \frac{\tr(C)}{2} \pm \sqrt{\frac{\tr(C)^2}{4} - \det(C)}$
and the conditioning number
\begin{align}
	\kappa(C)
	=
	\frac
	{\frac{\tr(C)}{2} + \sqrt{\frac{\tr(C)^2}{4} - \det(C)}}
	{\frac{\tr(C)}{2} - \sqrt{\frac{\tr(C)^2}{4} - \det(C)}}
	=
	\frac
	{1 + \sqrt{1 - \frac{4 \det(C)}{\tr(C)^2}}}
	{1 - \sqrt{1 - \frac{4 \det(C)}{\tr(C)^2}}}
	\label{eq:conditioning}
\end{align}
It holds that $\det(G) = 1$ by construction, which
implies $\det(C') = \det(C)$. With this property it is easy to see from equation~\eqref{eq:conditioning} that the
conditioning number $\kappa$ is a strictly monotonically increasing
function of $\tr(C)$. In the following we will therefore consider the
goal of minimizing $\tr(C)$ while keeping $\det(C)$ fixed. The
minimizer of $\tr(C)$ is a multiple of the identity matrix, which is
indeed our adaptation goal. Therefore, independent of the monotonic
relation to the condition number in the two-dimensional case, minimizing
the trace of $C$ is justified as a covariance matrix adaptation goal in
its own right. For a general Hessian this goal translates into
minimizing $\tr(H \cdot C)$ while keeping $\det(C)$ fixed, which is
equivalent to adapting $C$ towards a multiple of~$H^{-1}$. This
construction is compatible with Definition~\ref{def:stability-convergence}
using the trace to construct the pre-metric
$\delta(A, B) = \tr \Big( \frac{A}{\sqrt[d]{\det(A)}} \cdot \frac{B^{-1}}{\sqrt[d]{\det(B^{-1})}} \Big) - d$.

\pagebreak
\noindent
The following lemma computes the change of the trace induced by a single
update step.
\begin{lemma}
\label{lemma:trace-reduction}
For a matrix $A \in \GL(d, \R)$ and two orthonormal vectors $u_1, u_2 \in \R^d$
(fulfilling $\|u_i\| = 1$ and $u_1^T u_2 = 0$) we define the following quantities:
\begin{align*}
	C = A^T A,
	&\qquad
	h_i = u_i^T C u_i,
	\qquad
	\gamma_i = \left( \frac{h_1 h_2}{h_i^2} \right)^{1/4},
	\\
	G = I + \sum_{i=1}^2 (\gamma_i - 1) u_i u_i^T,
	&\qquad
	A' = A G,
	\qquad
	C' = (A')^T A' = G C G.
\end{align*}
It holds that $\det(C') = \det(C) > 0$ and
\begin{align*}
	\tr(C) - \tr(C') = h_1 + h_2 - 2 \sqrt{h_1 h_1} \geq 0
	\enspace.
\end{align*}
\end{lemma}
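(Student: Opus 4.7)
The plan is to directly compute $\tr(C')$ and $\det(C')$ by exploiting the algebraic form of $G$ together with the orthonormality of $u_1$ and $u_2$. Since $C = A^T A$ with $A \in \GL(d, \R)$, the matrix $C$ is symmetric positive definite, hence $h_i = u_i^T C u_i > 0$ and the fourth roots defining $\gamma_i$ are well-defined positive reals. Moreover $G$ is symmetric, so $C' = (AG)^T (AG) = G C G$.

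For the determinant, I would observe that $G$ acts as the identity on the orthogonal complement $V^{\perp}$ of $V = \mathrm{span}(u_1, u_2)$ and has $u_1, u_2$ as eigenvectors with eigenvalues $\gamma_1, \gamma_2$ on $V$. Hence $\det(G) = \gamma_1 \gamma_2$, and from the definition $\gamma_1 \gamma_2 = \bigl((h_1 h_2)^2/(h_1 h_2)^2\bigr)^{1/4} = 1$. The multiplicativity of the determinant then gives $\det(C') = \det(G)^2 \det(C) = \det(C)$, which is positive because $C$ is positive definite.

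For the trace, the cyclic property gives $\tr(C') = \tr(G C G) = \tr(C G^2)$. Using $u_1^T u_1 = u_2^T u_2 = 1$ and $u_1^T u_2 = 0$, the cross products in $G^2$ vanish and a short calculation (using $2(\gamma_i - 1) + (\gamma_i - 1)^2 = \gamma_i^2 - 1$) yields
\begin{equation*}
    G^2 = I + (\gamma_1^2 - 1)\, u_1 u_1^T + (\gamma_2^2 - 1)\, u_2 u_2^T.
\end{equation*}
Combining this with $\tr(C\, u_i u_i^T) = u_i^T C u_i = h_i$ gives
\begin{equation*}
    \tr(C) - \tr(C') = (1 - \gamma_1^2)\, h_1 + (1 - \gamma_2^2)\, h_2 = h_1 + h_2 - \gamma_1^2 h_1 - \gamma_2^2 h_2.
\end{equation*}

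The final step is to plug in the explicit values. From $\gamma_i^2 = (h_1 h_2)^{1/2}/h_i$ we get $\gamma_i^2 h_i = \sqrt{h_1 h_2}$ for both $i = 1, 2$, so $\tr(C) - \tr(C') = h_1 + h_2 - 2\sqrt{h_1 h_2} = (\sqrt{h_1} - \sqrt{h_2})^2 \geq 0$, giving the claimed identity (and incidentally correcting the obvious typo $\sqrt{h_1 h_1}$). I do not foresee any genuine obstacle here; the proof is essentially a bookkeeping exercise, and the only point to be careful about is verifying positive definiteness of $C$ so that the fourth roots in $\gamma_i$ make sense and $u_i u_i^T$ can be treated as rank-one projectors inside the trace.
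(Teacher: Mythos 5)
Your proof is correct and follows essentially the same route as the paper: establish $\det(G) = \gamma_1\gamma_2 = 1$ for the determinant claim, reduce the trace difference to $h_1 + h_2 - \gamma_1^2 h_1 - \gamma_2^2 h_2 = h_1 + h_2 - 2\sqrt{h_1 h_2}$, and conclude by AM--GM. The only (cosmetic) difference is that you compute $\tr(GCG) = \tr(CG^2)$ coordinate-free via the cyclic property and the expansion of $G^2$, whereas the paper extends $u_1, u_2$ to an orthonormal basis and reads off the diagonal of $U^T C' U$ explicitly; you also correctly identify the typo $\sqrt{h_1 h_1}$ in the statement, which should read $\sqrt{h_1 h_2}$.
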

\begin{proof}
The proof is elementary. Our first note is that $C$ is strictly positive
definite, which implies $h_i > 0$ and $\gamma_i > 0$.
We choose vectors $u_3, \dots, u_d$ so that $u_1, \dots, u_d$ form an
orthonormal basis of $\R^d$. We collect these vectors as columns in the
orthogonal matrix $U$. We then represent the matrix $C$ as an array of
coefficients in the above basis:
\begin{align*}
	U^T C U = \bmat
		c_{11} & c_{21} & \cdots & c_{d1} \\
		c_{12} & c_{22} & \cdots & c_{d2} \\
		\vdots & \vdots & \ddots & \vdots \\
		c_{1d} & c_{2d} & \cdots & c_{dd}
	\emat
	\enspace.
\end{align*}
In this basis the matrix $G$ has a particularly simple form:
\begin{align*}
	U^T G U = \bmat
			\gamma_1 & 0 & 0 & 0 & \cdots & 0 \\
			0 & \gamma_2 & 0 & 0 & \cdots & 0 \\
			0 & 0 & 1 & 0 & \cdots & 0 \\
			0 & 0 & 0 & 1 & \cdots & 0 \\
			\vdots & \vdots & \vdots & \vdots & \ddots & \vdots \\
			0 & 0 & 0 & 0 & \cdots & 1
		\emat
		\enspace.
\end{align*}
From $\gamma_1 \gamma_2 = 1$ we obtain $\det(U^T G U) = \det(G) = 1$,
which immediately implies the first claim $\det(C') = \det(C)$.
We compute the product $C' = G C G$ in the basis $U$ as follows:
$U^T C' U = U^T G C G U = (U^T G U) (U^T C U) (U^T G U)$. We obtain the
components
\begin{align*}
	U^T C' U \, &= \bmat
			\gamma_1^2        c_{11} & \gamma_1 \gamma_2 c_{21} & \gamma_1 c_{31} & \gamma_1 c_{41} & \cdots & \gamma_1 c_{d1} \\
			\gamma_1 \gamma_2 c_{12} & \gamma_2^2        c_{22} & \gamma_2 c_{32} & \gamma_2 c_{42} & \cdots & \gamma_1 c_{d2} \\
			\gamma_1          c_{13} & \gamma_2          c_{23} &          c_{33} &          c_{43} & \cdots &          c_{d3} \\
			\gamma_1          c_{14} & \gamma_2          c_{24} &          c_{34} &          c_{44} & \cdots &          c_{d4} \\
			\vdots                   & \vdots                   & \vdots          & \vdots          & \ddots & \vdots          \\
			\gamma_1          c_{1d} & \gamma_2          c_{2d} &          c_{3d} &          c_{4d} & \cdots &          c_{dd}
		\emat
	\enspace.
\end{align*}
It holds that $\tr(U^T C U) = \tr(C)$ and $\tr(U^T C' U) = \tr(C')$ due to
invariance of the trace under changes of the coordinate system. Our
target quantity $\tr(C) - \tr(C')$ is hence the difference of the sums
of the diagonals of the above computed matrices $U^T C U$ and $U^T C' U$,
which amounts to
\begin{align*}
	c_{11} + c_{22} - \gamma_1^2 c_{11} - \gamma_2^2 c_{22}
	\enspace.
\end{align*}
Using $c_{ii} = u_i^T C u_i = h_i$ we obtain
$\gamma_i^2 c_{ii} = \sqrt{h_1 h_2}$ for $i \in \{1, 2\}$. This
immediately yields $\tr(C') - \tr(C) = h_1 + h_2 - 2 \sqrt{h_1 h_2}$.
The right hand side is never negative because the arithmetic average of
two positive numbers is never smaller than their geometric average.
\end{proof}

The lemma shows that the trace \emph{never increases} due to a
covariance matrix update, no matter how the offspring are sampled. This
is a strong guarantee for the stability of the update. In contrast, the
update of CMA-ES can move the covariance matrix arbitrarily far away
from its target. Although large deviations happen with extremely small
probability, the probabilistic nature of its stability as an unbounded
Markov chain \citep{auger2005convergence} significantly complicates the
analysis. With (1+4)-HE-ES we are in the comfortable situation of
monotonic improvements, which is somewhat analogous to analyzing
algorithms with elitist selection.

We have established that for $H=I$ the sequence $\tr(C^{(t)})$ is
monotonically decreasing. With fixed determinant
$\det(C^{(t)}) = D$ it is bounded from below by $d \sqrt[d]{D}$, hence
it converges due to the monotone convergence theorem.
In the general setting, using affine invariance, this translates into
monotonic decrease of the sequence $\tr(C^{(t) H}$.

\subsection{Convergence of the Covariance Matrix to the Inverse Hessian}

It is left to show that the trace indeed converges to its lower bound,
which implies convergence of $C^{(t)}$ to a multiple of $H^{-1}$.
We are finally in the position to guarantee this property.

\begin{theorem}
\label{theorem:cma-convergence}
Let $A^{(t)}$ denote the sequence of transformation matrices of
(1+4)-HE-ES when optimizing a convex quadratic function with strictly
positive definite symmetric Hessian $H$. We define the sequence of
covariance matrices $C^{(t)} := (A^{(t)})^T A^{(t)}$. Then with full
probability it holds that
\begin{align*}
	\lim_{t \to \infty} C^{(t)} = \alpha \cdot H^{-1}
	\qquad
	\text{with}
	\quad
	\alpha = \sqrt[d]{\det\left(C^{(0)} \right) \cdot \det(H)}
	\enspace.
\end{align*}
\end{theorem}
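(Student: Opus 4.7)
The plan is to first reduce, via Lemmas~\ref{lemma:invariance-X} and~\ref{lemma:invariance-f}, to the sphere case $\tilde f(y) = \tfrac{1}{2} \|y\|^2$ by applying the affine search-space transformation $g(x) = H^{1/2}(x - x^*)$ together with an affine normalization of function values; this maps $A^{(0)}$ to $\tilde A^{(0)} = H^{1/2} A^{(0)}$, and the transformed covariance along the trajectory is $\tilde C^{(t)} = (A^{(t)})^T H A^{(t)}$, with $\det \tilde C^{(0)} = \det(H) \det(C^{(0)}) = \alpha^d$. It suffices to prove $\tilde C^{(t)} \to \alpha I$ almost surely for the sphere; the claim $C^{(t)} \to \alpha H^{-1}$ in the original coordinates then follows by inverting the relation between $\tilde C^{(t)}$ and $C^{(t)}$.

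For the sphere, Lemma~\ref{lemma:trace-reduction} establishes two key invariants: $\det \tilde C^{(t)} = \alpha^d$ is conserved, while $\tr \tilde C^{(t)}$ is monotonically non-increasing with deterministic per-step decrease $\bigl(\sqrt{h_1^{(t)}} - \sqrt{h_2^{(t)}}\bigr)^2 \geq 0$, where $h_i^{(t)} = (u_i^{(t)})^T \tilde C^{(t)} u_i^{(t)}$. By AM-GM applied to the eigenvalues, $\tr \tilde C^{(t)} \geq d\sqrt[d]{\det \tilde C^{(t)}} = d\alpha$, so the monotone sequence converges almost surely to some random limit $L \geq d\alpha$. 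Telescoping the per-step decreases and taking expectations yields
\[
    \sum_{t=0}^\infty \Expectation\bigl[ \psi(\tilde C^{(t)}) \bigr] \;=\; \tr \tilde C^{(0)} - \Expectation[L] \;<\; \infty,
\]
where $\psi(C) := \Expectation_{u_1, u_2}\bigl[ (\sqrt{u_1^T C u_1} - \sqrt{u_2^T C u_2})^2 \bigr]$ is the expected per-step trace reduction as a function of the current covariance; in particular $\Expectation[\psi(\tilde C^{(t)})] \to 0$.

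The crux is to upgrade this to the pointwise statement $L = d\alpha$ almost surely. Two structural observations drive the argument: first, $\psi$ is continuous and vanishes if and only if $u^T C u$ is constant on the unit sphere, equivalently $C = \alpha I$; second, along any sample path $\tilde C^{(t)}$ is confined to the compact set $K := \{ C \succ 0 : \det C = \alpha^d, \, d\alpha \leq \tr C \leq \tr \tilde C^{(0)} \}$. Assume for contradiction that $P(L \geq d\alpha + \epsilon) > 0$ for some $\epsilon > 0$. On this event $\tilde C^{(t)}$ eventually lies in the compact subset $K_\epsilon := \{ C \in K : \tr C \geq d\alpha + \epsilon/2 \}$, on which $\psi$ attains a strictly positive minimum $\eta_\epsilon$. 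Fatou's lemma then yields $\liminf_t \Expectation[\psi(\tilde C^{(t)})] \geq \eta_\epsilon \cdot P(L \geq d\alpha + \epsilon) > 0$, contradicting $\Expectation[\psi(\tilde C^{(t)})] \to 0$. Hence $L = d\alpha$ almost surely.

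Once $\tr \tilde C^{(t)} \to d\alpha$ with the determinant fixed at $\alpha^d$, the AM-GM equality case forces every eigenvalue of $\tilde C^{(t)}$ to tend to $\alpha$; since $\tilde C^{(t)}$ is symmetric, this yields $\tilde C^{(t)} \to \alpha I$ in operator norm, and translating back through the affine invariance used at the outset gives $C^{(t)} \to \alpha H^{-1}$. The main obstacle is the passage from convergence in expectation of $\psi(\tilde C^{(t)})$ to the pathwise limit $L = d\alpha$; the deterministic (pathwise) monotonicity from Lemma~\ref{lemma:trace-reduction} is essential here, since it confines the trajectory to a compact set on which $\psi$ admits a uniform positive lower bound whenever the trace exceeds $d\alpha$ by a fixed margin, ruling out probabilistic excursions.
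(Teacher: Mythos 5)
Your proof is correct and shares the paper's skeleton---reduction to the sphere via affine invariance, Lemma~\ref{lemma:trace-reduction} for the deterministic per-step trace decrease, and a compactness-plus-continuity argument showing that the expected decrease $\psi(C) = \Expectation_{u_1,u_2}[\Delta_C]$ is bounded away from zero whenever $\tr C$ is bounded away from its minimum---but you close the argument by a genuinely different route. The paper fixes a target level $\rho^* > d$, sandwiches the expected one-step decrease in $[Q(\rho^*), R(\rho^{(0)})]$, and invokes Hajek's additive-drift theorem to show the first hitting time of $\rho^*$ is almost surely finite; you instead use that the pathwise-monotone trace converges to a random limit $L$, telescope and take expectations to obtain $\sum_t \Expectation[\psi(\tilde C^{(t)})] < \infty$, and exclude $P(L > d\alpha) > 0$ via Fatou's lemma and the uniform lower bound $\eta_\epsilon$ on the compact set $K_\epsilon$. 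Your version is more self-contained (no external drift theorem) and makes explicit two points the paper passes over quickly: the confinement of the trajectory to a compact set on which $\psi$ attains a positive minimum (which replaces the paper's unnecessary appeal to monotonicity of $Q$ and $R$), and the AM--GM equality case forcing all eigenvalues to $\alpha$ once the trace reaches its lower bound. What you give up is quantitative information: Hajek's theorem yields exponentially decaying tail bounds on the hitting time of each trace level, whereas your soft argument delivers only the qualitative almost-sure limit. One caveat you share with the paper: the translation back to original coordinates gives $(A^{(t)})^T H A^{(t)} \to \alpha I$, which directly yields $A^{(t)} (A^{(t)})^T \to \alpha H^{-1}$ (the actual sampling covariance) but requires an additional argument to conclude $(A^{(t)})^T A^{(t)} \to \alpha H^{-1}$ as literally stated; this transpose subtlety originates in the paper's convention $C = A^T A$ and is not a defect specific to your write-up.
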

\begin{proof}
The proof is based on topological arguments and drift.
For technical reasons and for ease of notation, and importantly without
loss of generality, we restrict ourselves to the case
$\det\left(C^{(t)}\right) = 1$, $H = I$, and hence $\alpha = 1$.

Under the constraint $\det(C) = 1$ the function $\tr(C)$ attains its
minimum $\tr(I) = d$ at the unique minimizer $C = I$. Consider a fixed
covariance matrix $C$ and random vectors $u_1, u_2$. According to
Lemma~\ref{lemma:trace-reduction} the function
\begin{align*}
	\Delta_C(u_1, u_2) = u_1^T C u_1 + u_2^T C u_2 - 2 \sqrt{u_1^T C u_1 \cdot u_2^T C u_2} \geq 0
\end{align*}
computes the single-step reduction of the trace when sampling in
directions $u_1$ and $u_2$. The function is analytic in $C$ and in
$u_i$, and for $C \not= I$ it is non-constant in $u_i$, hence it is
zero only on a set of measure zero with respect to the random
variables $u_1, u_2$. We conclude that in expectation over $u_1$ and
$u_2$ it holds that
\begin{align*}
	\Expectation[\Delta_C] > 0 \qquad \forall \, C \not= I
	\enspace.
\end{align*}
In the next step we exploit the continuity of the function
$C \mapsto \Expectation[\Delta_C]$.

We fix a ``quality'' level $\rho = \tr(C)$. In other words, for a given
suboptimal level $\rho > d$ we consider an arbitrary covariance matrix
$C$ fulfilling $\tr(C) = \rho$ and $\det(C) = 1$. The set
\begin{align*}
	\tr^{-1}(\rho) = \Big\{ C \in \SL(d, \R) \,\Big|\, \tr(C) = \rho \Big\}
\end{align*}
is compact: being the pre-image of a point under a continuous map it is
closed, the eigenvalues of $C$ are upper bounded by $\rho$, and the
space of eigenbases is the orthogonal group, which is compact.
Therefore the expected progress $\Expectation[\Delta_C]$ attains its
minimum and its maximum on this set. We denote them by
\begin{align*}
	Q(\rho) = \min_{C \in \tr^{-1}(\rho)} \Expectation[\Delta_C]
	\qquad
	\text{and}
	\qquad
	R(\rho) = \max_{C \in \tr^{-1}(\rho)} \Expectation[\Delta_C]
	\enspace.
\end{align*}
We note three convenient properties:
\begin{compactitem}
\item
	It holds that $Q(\rho) > 0 \Leftrightarrow \rho > d \Leftrightarrow R(\rho) > 0$,
\item
	$Q$ and $R$ are monotonically increasing functions, and
\item
	$Q$ and $R$ are continuous functions.
\end{compactitem}
We aim to show that the sequence $\rho^{(t)} = \tr(C^{(t)})$ converges
to $d$ with full probability. To this end we pick a target level
$\rho^* > d$, so we have to show that the sequence $\rho^{(t)}$ falls
below $\rho^*$. This is achieved by applying an additive drift argument
and using the monotonicity of $R$ and $Q$ as well as the monotonic
decrease of $\rho^{(t)}$ (Lemma~\ref{lemma:trace-reduction}).
By construction it holds that
\begin{align}
	\Expectation\left[\rho^{(t)} - \rho^{(t+1)}\right] \in \Big[ Q(\rho^{(t)}), R(\rho^{(t)}) \Big] \subset \Big[ Q(\rho^*), R(\rho^{(0)}) \Big]
	\enspace.
	\label{eq:CMA-drift}
\end{align}
Here, the monotonic reduction of $\rho^{(t)}$ together with the
monotonicity of $Q$ and $R$ yield $t$-independent lower and upper bounds
on the expected progress, as long as it holds that $\rho^{(t)} \geq \rho^*$.
The existence of the two bounds allows us to apply a drift
argument. We define the first hitting
time $T(\rho^*) = \min\{t \in \N \,|\, \rho(t) \leq \rho^*\}$ of reaching the
target $\rho^*$. \citet[theorem~2.3, equation~2.9]{hajek1982hitting}
guarantees that the probability $\Pr(T(\rho^*) > k)$ tends to zero as
$k \to \infty$ (and it does so exponentially fast). Hence, the sequence
$\rho(t)$ eventually falls below
$\rho^*$ with probability one. Since $\rho^* > d$ was arbitrary we
conclude that $\rho^{(t)} \rightarrow d$ with full probability.
This proves $C^{(t)} \rightarrow I$ in our case, and hence in general
$C^{(t)} \rightarrow \alpha \cdot H^{-1}$ due to affine invariance. The
form of the scaling factor
$\alpha = \sqrt[d]{\det\left(C^{(0)} \right) \cdot \det(H)}$ results
immediately from affine invariance and the need to fulfill
$\det(H C^{(t)}) = \det(H C^{(0)}) = \det(H) \det(C^{(0)}) = \alpha$.
\end{proof}

The above theorem establishes that the update of
(1+4)-HE-ES is not only stable and improving the covariance matrix
monotonically, but that it also
achieves its goal of converging to a multiple of the inverse Hessian.
To the best of our knowledge, this is the first theorem proving that
the covariance matrix update of a variable-metric evolution strategy
has this property. This stability of $C^{(t)}$ will allow us to derive a strong
convergence speed result for $m^{(t)}$ in the next section.

We would like to note that equation~\eqref{eq:CMA-drift} can be
understood as a variable drift condition \citep{doerr2011sharp} for
$\rho^{(t)}$. A more detailed drift analysis bears the potential to bound the
time it takes for the covariance matrix to adapt to the problem at hand.
However, the task of bounding $Q$ and $R$ is non-trivial.
In practice we find that the covariance matrix
converges at a linear rate, see figure~\ref{figure:cma-speed}.
\begin{figure}
\begin{center}
\includegraphics[width=0.49\textwidth]{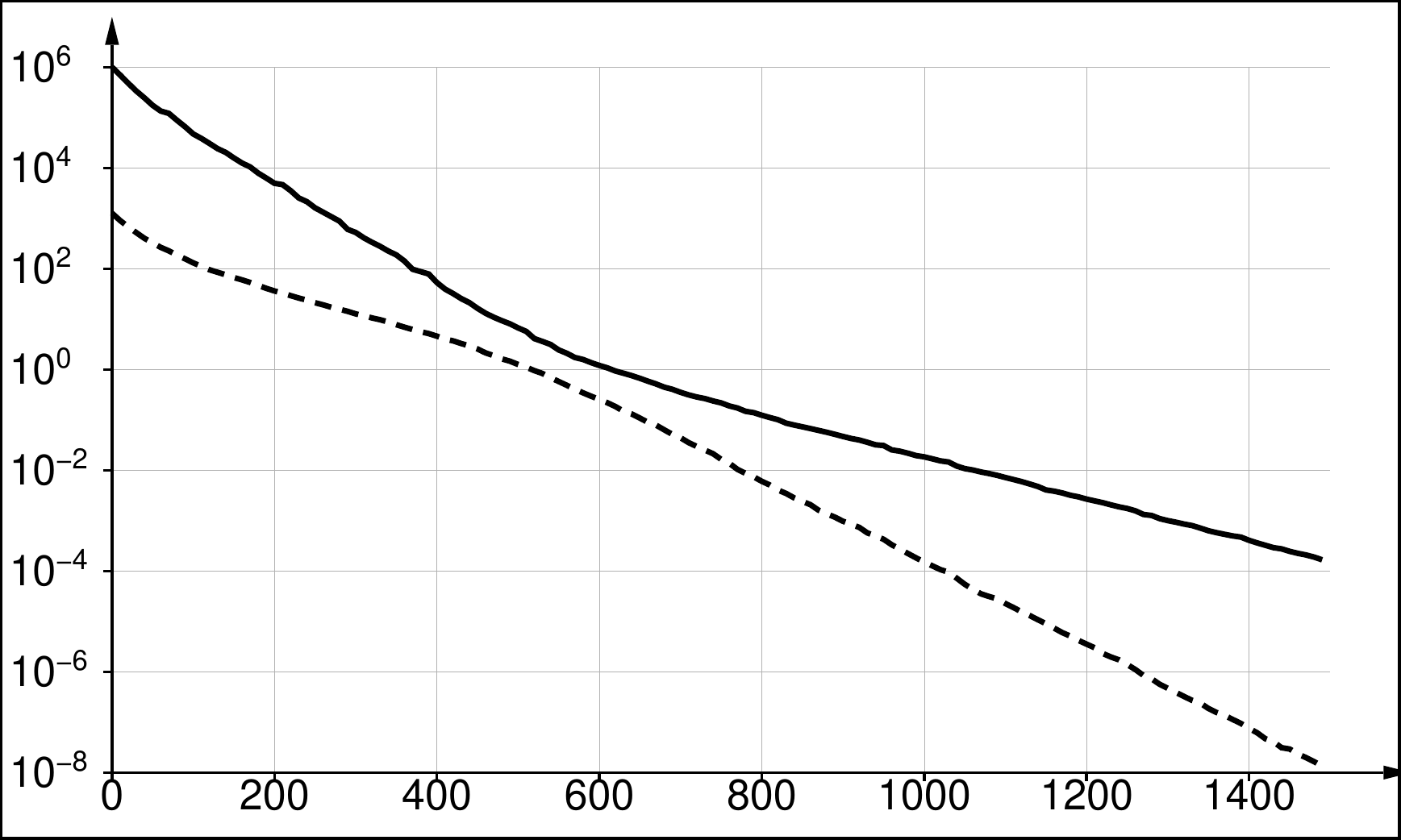}
\includegraphics[width=0.49\textwidth]{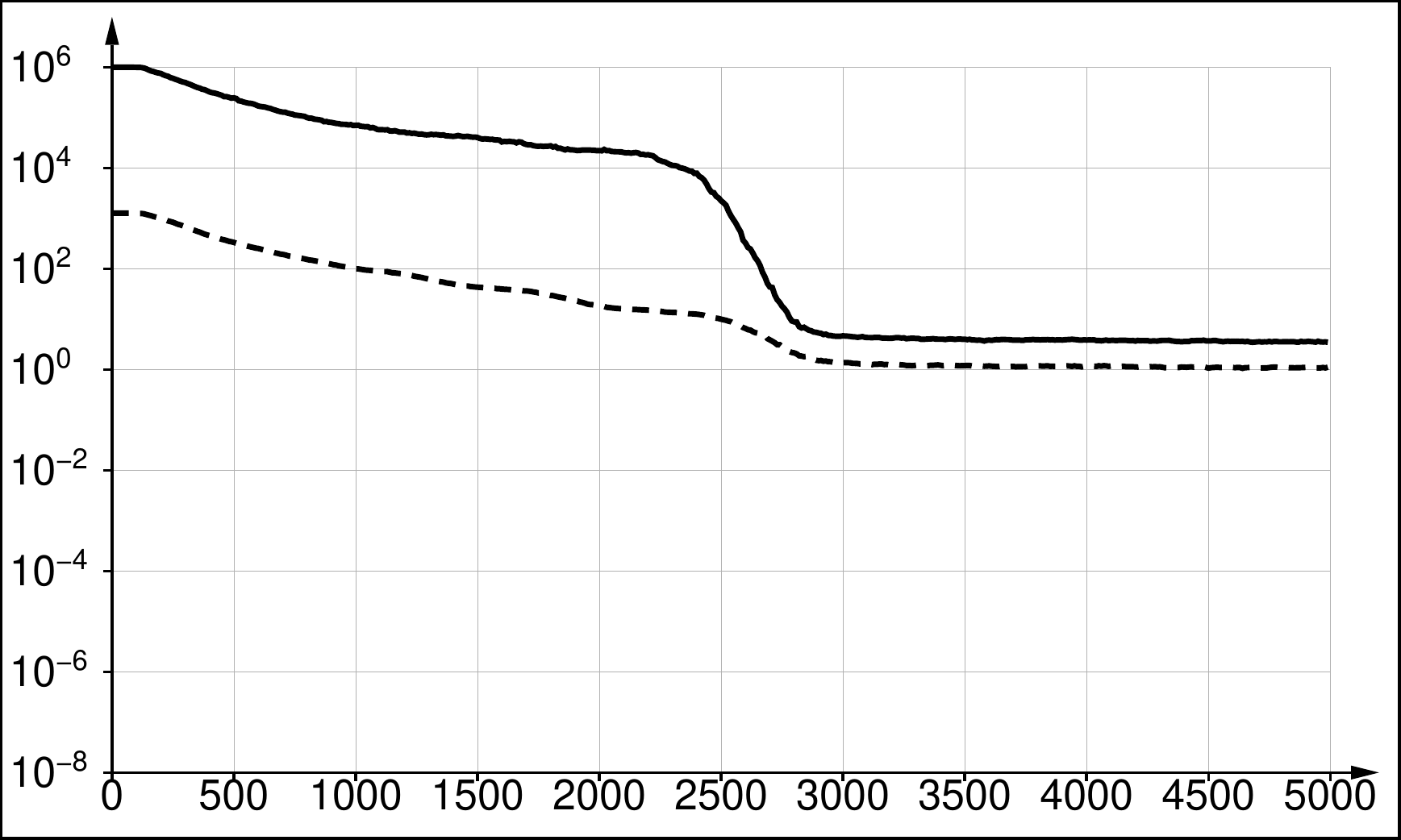}
\caption{\label{figure:cma-speed}
	The plots show the time evolution of condition number $\kappa(C)$
	(solid curve) and trace $\tr(C)$ (dashed curve), both with their
	global minima of $1$ and $d$ subtracted, on a logarithmic scale,
	for (1+4)-HE-ES on the left and (1+1)-CMA-ES on the right. For
	CMA-ES, the trace is computed on a suitably normalized multiple
	of the covariance matrix.
	The algorithms are run on the sphere function, but they are
	initialized with a covariance matrix resembling an optimization
	run of an ellipsoid function with conditioning number $10^6$
	when starting from an isotropic search distribution. The curves
	are medians over 99 independent runs. In the right half of the
	left plot the covariance matrix is already adapted extremely
	close to the identity. It is clearly visible that in this late
	phase $\kappa(C)$ and $\tr(C)$ both converge at a linear rate.
	In contrast, with the CMA-ES update the precision saturates at
	some non-optimal value.
}
\end{center}
\end{figure}

\section{Linear Convergence of HE-ES on Convex Quadratic Functions}

In this section we establish that the (1+4)-HE-ES converges at a linear rate that is independent of the problem difficulty $\kappa(H)$. The proof builds on the stability of the covariance matrix update established in the previous section, as well as on the analysis of the (1+1)-ES by \citet{morinaga2019generalized}. We adapt notations and definitions in the following to make the two analyses compatible.

Defining linear convergence of stochastic algorithms is not a straight-forward task. We define linear convergence in terms of the first hitting time:
\begin{definition}\label{def:linear}
    Let $\left(X^{(t)}\right)_{t \in \N}$ a sequence of random variables with $\Expectation[X^{(t)}]\rightarrow X^*$ and
    let $\Psi^{(t)}= \Psi(X^{(t)})$ be a potential function with $\Expectation[\Psi^{(t)}]\rightarrow -\infty$. The first hitting time of the target $\delta$ is defined as 
    $$ T_{\Psi}(\delta) = \min \big\{t \in \N \,\big|\, \Psi^{(t)} < \delta \big\}. $$
    We say that $X^{(t)}$ converges $\Psi$-linearly to $X^*$, if there exists $Q$ such, that
$$\lim_{\epsilon \rightarrow 0}\Expectation \left[\frac {T_{\Psi}(\log \epsilon)}{-\log \epsilon}\right] \leq \log(Q)\enspace.$$
\end{definition}
The intuition of this definition is that $\Psi$ measures the logarithmic distance from the optimum, e.g. $\Psi(x)=\log \lVert x-x^* \rVert$. When considering a deterministic algorithm, i.e. $X^{(t)}$ is a sequence of dirac-distributions, this choice of $\Psi$ makes our definition equivalent to $Q$-linear convergence.

The recent work of \citet{akimoto2018drift} established linear convergence of the (1+1)-ES on the sphere function by means of drift analysis.
The result was significantly extended by \citet{morinaga2019generalized} to a large class of functions, including strongly convex $L$-smooth functions. As a special case it establishes linear convergence for all convex quadratic problems. Unsurprisingly this comes at the price of a
worse convergence rate, a result that was first established by
\citet{jaegerskuepper2006quadratic}. This is because all of the above
results refer to a simple ES without covariance matrix adaptation.
Analyzing an ES with CMA has proven to be significantly more difficult than analyzing an ES without CMA on a potentially ill-conditioned convex quadratic function. The reason is that adapting the covariance matrix can turn the problem into \emph{any} convex quadratic function, with unbounded conditioning number (or trace), while the condition number is bounded in case of an arbitrary but fixed convex quadratic problem and isotropic mutations. However, the stability of the update established in the previous section allows us to derive a strong convergence result even with an elaborate CMA mechanism in place.

The technically rather complicated proof in this section is based on a
simple idea. Using invariance properties, the optimization problem faced
by (1+4)-HE-ES can be transformed into a convex quadratic problem faced by
the simple (1+1)-ES, independently in each iteration. This amounts to
optimizing a dynamically changing sequence of convex quadratic objective
function with the (1+1)-ES. The sequence of objective functions lies
within a class that is covered by \citet{morinaga2019generalized}. We need
to adapt that analysis only slightly, arguing that it does not only hold
for a single function from a flexible class of functions, but
\emph{uniformly} for function classes with bounded conditioning of the
Hessian. Then the analysis holds even for dynamically changing
functions, as long as the sequence remains inside of the function class.
The last part is a direct consequence of the stability of the HE-ES update.

\begin{figure}
    \centering
    \includegraphics{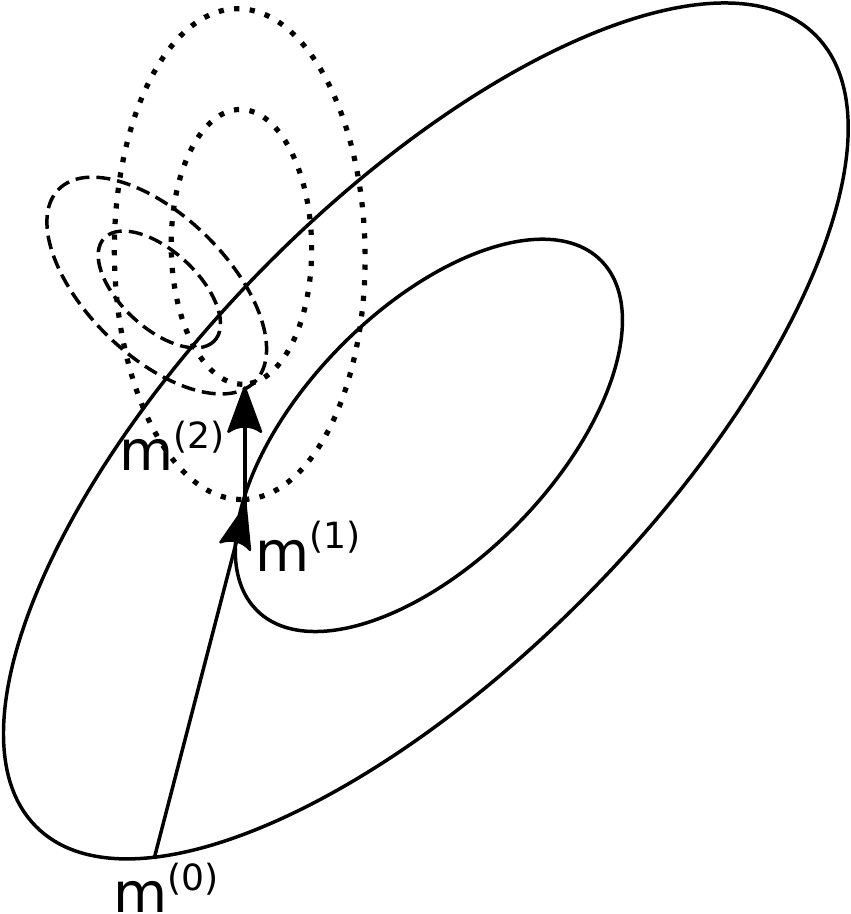}
    \caption{Visualization of the function-sequence described in Corollary~\ref{corollary:functions}. For a set of three points $m^{(t)}$, three functions $\tilde{f}^{(t)}$ are depicted (continuous, dotted, and dashed contour-lines). The functions $\tilde{f}^{(t)}$ and $\tilde{f}^{(t+1)}$ are chosen such that their function-values agree at point $m^{(t+1)}$ and thus function-value decreases of a successful step (black-arrows) amount to the same progress as on the target function $f$. Further note that contour-lines of same function-values between functions encompass the same area, which is proven in Lemma~\ref{lemma:fmu_invariance}. }
    \label{fig:figure_sequence}
\end{figure}
We consider (1+4)-HE-ES optimizing the convex quadratic function \eqref{eq:objective}
with unique minimum $x^*$, optimal value $f^*$, and strictly positive definite Hessian $H$. The following corollary will allow us to rephrase the results of the previous section in terms that are more compatible with \citet{morinaga2019generalized}:
\begin{corollary}\label{corollary:functions}
Consider the state-trajectory
$\left(m^{(t)}, \sigma^{(t)}, A^{(t)}\right)_{t \in \N}$
of the (1+4)-HE-ES applied to the convex quadratic function
$$ f(x) = \frac12 (x-x^*)^T H (x-x^*) + f^*\enspace.$$
There exists a sequence of functions $\tilde f^{(t)}=f \circ [g^{(t)}]^{-1}$ such, that the state-trajectory is equivalent to the run of a (1+1)-ES optimizing $\tilde f^{(t)}$ in iteration $t$ starting from $(m^{(0)}, \sigma^{(0)})$. The state-trajectory of the (1+1)-ES is $\left(\tilde m^{(t)}, \tilde \sigma^{(t)} \right)_{t \in \N}$ and for all $t \in \N$ it holds that
\begin{enumerate}
    \item $f(m^{(t)})=\tilde f^{(t)}(\tilde m^{(t)})$
    \item $\tilde \sigma^{(t)} = \sigma^{(t)}$
    \item $f(m^{(t+1)})=\tilde f^{(t)}(\tilde m^{(t+1)})$
    \item $\nabla^2 \tilde f^{(t)}(x) \xrightarrow{t\rightarrow \infty} \alpha I$, $\alpha > 0$
\end{enumerate}
\end{corollary}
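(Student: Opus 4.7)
The plan is to construct the sequence $\tilde{f}^{(t)}$ from the (1+4)-HE-ES trajectory by applying Lemma~\ref{lemma:invariance-X} in each iteration with a carefully chosen affine transformation, and then to read off property~4 from Theorem~\ref{theorem:cma-convergence}. The key design choice is the linear part $M^{(t)}$ of $g^{(t)}$: the (1+1)-ES enforces $\tilde{A}^{(t)} = I$, whereas Lemma~\ref{lemma:invariance-X} gives $\tilde{A}^{(t)} = M^{(t)} A^{(t)}$, so we need $M^{(t)}$ such that $M^{(t)} A^{(t)}$ is orthogonal; then, by orthogonal invariance of \texttt{sampleOrthogonal}, the samples in the (1+4)-HE-ES and (1+1)-ES views can be coupled. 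The symmetric choice $M^{(t)} = \bigl[\sqrt{C^{(t)}}\bigr]^{-1}$ satisfies this requirement and, as a bonus, produces a Hessian for $\tilde{f}^{(t)}$ that directly converges to $\alpha I$.

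Concretely, I would put $\tilde{m}^{(0)} = m^{(0)}$, $\tilde{\sigma}^{(0)} = \sigma^{(0)}$, and for each $t$ define
\[
    g^{(t)}(x) = \bigl[\sqrt{C^{(t)}}\bigr]^{-1}(x - m^{(t)}) + \tilde{m}^{(t)},
    \qquad
    \tilde{f}^{(t)} = f \circ [g^{(t)}]^{-1}.
\]
The matrix $Q^{(t)} := \bigl[\sqrt{C^{(t)}}\bigr]^{-1} A^{(t)}$ is orthogonal, since $[Q^{(t)}]^T Q^{(t)} = [A^{(t)}]^T (C^{(t)})^{-1} A^{(t)} = I$. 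Applying Lemma~\ref{lemma:invariance-X} at iteration $t$ with $M = M^{(t)}$ identifies one (1+4)-HE-ES step on $f$ from $(m^{(t)}, \sigma^{(t)}, A^{(t)})$ with one (1+4)-HE-ES step on $\tilde{f}^{(t)}$ from $(\tilde{m}^{(t)}, \sigma^{(t)}, Q^{(t)})$. Because the first sample consulted by the $m$- and $\sigma$-updates is $\tilde{m}^{(t)} + \sigma^{(t)} Q^{(t)} \tilde{b}_1$, and because $Q^{(t)} \tilde{b}_1$ has exactly the same distribution as a fresh first output of \texttt{sampleOrthogonal}, this step can be coupled with a (1+1)-ES step on $\tilde{f}^{(t)}$ from $(\tilde{m}^{(t)}, \sigma^{(t)})$ that uses $\tilde{b}'_1 = Q^{(t)} \tilde{b}_1$. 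Defining $(\tilde{m}^{(t+1)}, \tilde{\sigma}^{(t+1)})$ as the outcome of that (1+1)-ES step closes the induction.

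The four conditions then drop out. Property~1 is $\tilde{f}^{(t)}(\tilde{m}^{(t)}) = f([g^{(t)}]^{-1}(\tilde{m}^{(t)})) = f(m^{(t)})$; property~2 holds because the $1/5$-rule is rank-based and function values are preserved along $g^{(t)}$; property~3 follows from $\tilde{m}^{(t+1)} = g^{(t)}(m^{(t+1)})$, which is forced by the coupling (either both variants accept and the accepted offspring are related by $g^{(t)}$, or both reject and the means stay put). For property~4, Theorem~\ref{theorem:cma-convergence} gives $C^{(t)} \to \alpha H^{-1}$ almost surely, so continuity of the symmetric square root and of matrix multiplication yields
\[
    \nabla^2 \tilde{f}^{(t)} = \sqrt{C^{(t)}}\, H\, \sqrt{C^{(t)}} \longrightarrow \sqrt{\alpha}\, H^{-1/2}\, H\, \sqrt{\alpha}\, H^{-1/2} = \alpha I.
\]
The main obstacle is the identification of the (1+4)-HE-ES step with a (1+1)-ES step: it requires the symmetric rather than the Cholesky square root so that $Q^{(t)}$ is orthogonal, together with the orthogonal invariance of \texttt{sampleOrthogonal} and the observation that the mean and step-size updates only look at the first sample and at ranks of function values. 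The alternative choice $M^{(t)} = [A^{(t)}]^{-1}$, although it would make $Q^{(t)} = I$ outright, produces a Hessian $[A^{(t)}]^T H A^{(t)}$ which need not converge because $A^{(t)}$ is only determined up to an arbitrary orthogonal factor; ruling that choice out is what forces the symmetric square root and hence the coupling argument.
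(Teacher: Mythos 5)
Your construction diverges from the paper's at the decisive point, and the step on which everything else rests is false as written. With the paper's convention $C^{(t)} = (A^{(t)})^T A^{(t)}$, the matrix $Q^{(t)} = \bigl[\sqrt{C^{(t)}}\bigr]^{-1} A^{(t)}$ is \emph{not} orthogonal: you compute
$[Q^{(t)}]^T Q^{(t)} = (A^{(t)})^T \bigl((A^{(t)})^T A^{(t)}\bigr)^{-1} A^{(t)} = (A^{(t)})^T (A^{(t)})^{-1} (A^{(t)})^{-T} A^{(t)}$,
which equals $I$ only when $A^{(t)}$ is normal (try $A = \left(\begin{smallmatrix}1 & 1\\ 0 & 1\end{smallmatrix}\right)$). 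The orthogonal polar factor is $\bigl(A^{(t)} (A^{(t)})^T\bigr)^{-1/2} A^{(t)}$, i.e., your construction needs the symmetric square root of $A A^T$ (the actual covariance of $A b$ for $b \sim \Normal(0,I)$), not of $A^T A$. Since your entire identification with the (1+1)-ES hinges on $Q^{(t)}$ being orthogonal --- so that $Q^{(t)} \tilde b_1$ is again distributed like a fresh output of \texttt{sampleOrthogonal} --- properties 1--3 do not follow from the argument as given. The slip is understandable because the paper itself is loose about $A^T A$ versus $A A^T$, but it must be repaired before your proof stands, and after the repair your route still requires a distributional coupling (re-randomizing $b_1 \mapsto Q^{(t)} b_1$) that the corollary's pathwise phrasing does not really want.

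The deeper issue is that you were pushed into this complication by a misdiagnosis of the simpler choice, which is exactly what the paper uses: $g^{(t)}(x) = [A^{(t)}]^{-1}(x - m^{(t)}) + \tilde m^{(t)}$. This makes the transformed transformation matrix exactly $I$, so one iteration of the transformed (1+4)-HE-ES \emph{is} one iteration of the (1+1)-ES with the very same random vectors --- no coupling, no appeal to orthogonal invariance, and statements 1--3 become immediate pathwise identities. Your objection that the resulting Hessian $(A^{(t)})^T H A^{(t)}$ ``need not converge'' is unfounded: $A^{(t)}$ is a concrete sequence ($A^{(t+1)} = A^{(t)} G^{(t)}$ with symmetric, unit-determinant $G^{(t)}$), and by equation~\eqref{eq:curvature} the curvatures driving the update are precisely $h_i = u_i^T (A^{(t)})^T H A^{(t)} u_i$, so Lemma~\ref{lemma:trace-reduction} and the drift argument of Theorem~\ref{theorem:cma-convergence} apply verbatim to the sequence $(A^{(t)})^T H A^{(t)}$ and yield $(A^{(t)})^T H A^{(t)} \to \alpha I$. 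You are right about one thing: for non-normal $A^{(t)}$ this is \emph{not} a formal consequence of the literal statement $A^T A \to \alpha H^{-1}$ (it is equivalent to $A A^T \to \alpha H^{-1}$), and that transpose wrinkle is genuinely present in the paper. But it does not force the symmetric square root; it only means property 4 should be read off from what the convergence analysis actually establishes rather than from the theorem's literal wording.
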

\begin{proof}
The proof is straight-forward. We define
$$g^{(t)}(x)=[A^{(t)}]^{(-1)}(x- m^{(t)}) + \tilde m^{(t)}\enspace.$$
With this choice, the first statement is fulfilled by construction. The second statement can be derived analogous to the proof of Lemma~\ref{lemma:invariance-X} equation \eqref{eq:samples} and the third statement can be obtained by noting that
$$[g^{(t)}]^{-1}(\tilde m^{(t+1)})= m^{(t)} + A^{(t)}(\tilde m^{(t+1)} - \tilde m^{(t)}) = m^{(t+1)} \enspace.$$

For the fourth statement, we obtain that $\nabla^2 \tilde f^{(t)}(x) = A^{(t)} H \left(A^{(t)}\right)^T$. Since by Lemma~\ref{theorem:cma-convergence}, $C^{(t)}= \left(A^{(t)}\right)^T A^{(t)}\xrightarrow{t\rightarrow \infty} \alpha H^{-1}$ we obtain $\nabla^2 \tilde f^{(t)}(x) \xrightarrow{t\rightarrow \infty} \alpha I$.
\end{proof}

In other words, instead of considering an update of the covariance matrix, we can apply the (1+1)-ES to a sequence of functions that converge to the sphere function.
This requires that the chosen sequence of functions does not change the behaviour of the optimizer. For this, statements 1 and 3 are crucial, because they can be used to show that single-step improvements on the set of functions can be related to improvements on the target function. This result does not extend to the two-step progress and $f(m^{(t+2)}) - f(m^{(t)}) \neq \tilde f^{(t)}(\tilde m^{(t+2)}) - \tilde f^{(t)}(\tilde m^{(t)})$ as the two steps are taken in different coordinate systems. Figure \ref{fig:figure_sequence} gives a visual depiction of this sequence of functions.

The analysis of \citet{morinaga2019generalized} does not use the function-values directly, but instead uses a different function to show convergence. This is to allow their analysis to be invariant to strictly monotically increasing transformations of the function values. This is an important property in their analysis, because otherwise transforming a function would have an impact on the measured convergence speed.
To achieve this, we define the function
$$f_\mu(m) = \sqrt[d]{\mu\big(\big\{x \in \R^d \,\big|\, f(x) < f(m)\big\}\big)} \enspace,$$
which denotes the $d$-th root of the
Lebesgue measure of the set of points that improve upon $m$. For this function it holds that $f(x) < f(y) \Leftrightarrow f_\mu(x) < f_\mu(y)$. For the sphere function, $f_\mu$ can be computed analytically and we obtain
$f_\mu(m) = \gamma_d \cdot \|m - x^*\|$, where $\gamma_d$ is a
dimension-dependent constant. This justifies the use of $f_\mu(m)$ as a
measure of distance of $m$ to the optimum.
For a general convex quadratic function it holds that
$$ f_\mu(m) = \frac{\gamma_d}{\sqrt[d]{\det(H)}} \cdot \sqrt{f(m) - f^*}. $$
The question arises, whether this transformation is compatible 
with the set of functions defined in Corollary~\ref{corollary:functions}. The answer is given by the following lemma:
\begin{lemma}\label{lemma:fmu_invariance}
Let $\tilde f^{(t)}=f \circ [g^{(t)}]^{-1}$, $\tilde m^{(t)}$,  $t \in \N$ the set of functions and vectors as defined in Corollary~\ref{corollary:functions} and

$$\tilde f^{(t)}_\mu(m) = \sqrt[d]{\mu\big(\big\{x \in \R^d \,\big|\, \tilde f^{(t)}(x) < \tilde f^{(t)}(m)\big\}\big)}\enspace.$$
It holds that
\begin{enumerate}
\item $\tilde f^{(t)}_\mu(\tilde m^{(t)}) =  \tilde f^{(t-1)}_\mu(\tilde m^{(t)})$
\item $ \tilde f^{(t)}_\mu(\tilde m^{(t)}) = \frac 1 {\sqrt[d]{ \det{A^{(0)}}}} f_\mu(m^{(t)})$
\end{enumerate}
\end{lemma}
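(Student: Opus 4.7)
The plan is to reduce both statements to the change-of-variables formula for the Lebesgue measure under the affine map $[g^{(t)}]^{-1}$. Recall from the proof of Corollary~\ref{corollary:functions} that $g^{(t)}(x) = [A^{(t)}]^{-1}(x - m^{(t)}) + \tilde m^{(t)}$, so $[g^{(t)}]^{-1}(y) = A^{(t)}(y - \tilde m^{(t)}) + m^{(t)}$. The linear part of $[g^{(t)}]^{-1}$ is $A^{(t)}$, which has constant Jacobian determinant $\det(A^{(t)})$. An important preliminary observation is that $|\det(A^{(t)})| = |\det(A^{(0)})|$ for all $t$: this follows because each update multiplies $A^{(t)}$ by the matrix $G$ returned by \texttt{computeG}, and $\det(G) = 1$ (noted in Lemma~\ref{lemma:trace-reduction} and in the discussion surrounding equation~\eqref{eq:G}).

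Next I would establish a general scaling formula. For any $m \in \R^d$, the sublevel set of $\tilde f^{(t)}$ is the $[g^{(t)}]^{-1}$-preimage of the corresponding sublevel set of $f$:
\begin{align*}
\{x : \tilde f^{(t)}(x) < \tilde f^{(t)}(m)\} = g^{(t)}\Big(\{y : f(y) < f([g^{(t)}]^{-1}(m))\}\Big).
\end{align*}
Since $g^{(t)}$ is affine with linear part $[A^{(t)}]^{-1}$, the Lebesgue measures of these two sets differ exactly by the factor $1/|\det(A^{(t)})|$. Taking $d$-th roots,
\begin{align*}
\tilde f^{(t)}_\mu(m) = \frac{1}{\sqrt[d]{|\det(A^{(t)})|}} \cdot f_\mu\big([g^{(t)}]^{-1}(m)\big).
\end{align*}

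For statement 2, evaluate this identity at $m = \tilde m^{(t)}$. Since $[g^{(t)}]^{-1}(\tilde m^{(t)}) = m^{(t)}$ by the definition of $g^{(t)}$, and $|\det(A^{(t)})| = |\det(A^{(0)})|$, we obtain
\begin{align*}
\tilde f^{(t)}_\mu(\tilde m^{(t)}) = \frac{1}{\sqrt[d]{|\det(A^{(0)})|}} \, f_\mu(m^{(t)}),
\end{align*}
as claimed. For statement 1, apply the general formula with the previous iteration's data:
\begin{align*}
\tilde f^{(t-1)}_\mu(\tilde m^{(t)}) = \frac{1}{\sqrt[d]{|\det(A^{(t-1)})|}} \, f_\mu\big([g^{(t-1)}]^{-1}(\tilde m^{(t)})\big).
\end{align*}
From the proof of Corollary~\ref{corollary:functions} we have $[g^{(t-1)}]^{-1}(\tilde m^{(t)}) = m^{(t-1)} + A^{(t-1)}(\tilde m^{(t)} - \tilde m^{(t-1)}) = m^{(t)}$, so the right-hand side equals $\frac{1}{\sqrt[d]{|\det(A^{(0)})|}} f_\mu(m^{(t)})$, which matches statement 2. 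Both statements therefore coincide.

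There is no real obstacle here; the lemma is essentially bookkeeping around the unit-determinant property of the HE-ES update. The only subtlety is making sure to separate the two ingredients cleanly: the change-of-variables under an affine map, and the invariance $\det(A^{(t)}) = \det(A^{(0)})$ coming from $\det(G) = 1$. Once both are in place, both claims fall out immediately from evaluating the general formula at $\tilde m^{(t)}$.
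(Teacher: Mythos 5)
Your proof is correct, and it takes a mildly but genuinely different route from the paper's. The paper works \emph{incrementally}: it introduces the transition maps $\varphi^{(t)} = g^{(t-1)} \circ [g^{(t)}]^{-1}$, observes that $\nabla \varphi^{(t)} = [A^{(t-1)}]^{-1} A^{(t)} = G^{(t)}$ has unit determinant so that $\varphi^{(t)}$ preserves Lebesgue measure and fixes $\tilde m^{(t)}$, and deduces statement~1 directly as an equality of measures of two sets related by $\varphi^{(t)}$; statement~2 then follows by iterating the decomposition $\tilde f^{(t)} = \tilde f^{(0)} \circ \varphi^{(1)} \circ \cdots \circ \varphi^{(t)}$ and doing one final change of variables with $A^{(0)}$. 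You instead prove a single master identity $\tilde f^{(t)}_\mu(m) = |\det A^{(t)}|^{-1/d} f_\mu([g^{(t)}]^{-1}(m))$ by one change of variables against the base function $f$, combine it with the invariant $|\det A^{(t)}| = |\det A^{(0)}|$ (which encodes the same $\det(G)=1$ fact the paper uses), and read off both statements, with statement~1 falling out because both sides equal $|\det A^{(0)}|^{-1/d} f_\mu(m^{(t)})$ via the identity $[g^{(t-1)}]^{-1}(\tilde m^{(t)}) = m^{(t)}$ from the corollary. The two arguments rest on identical ingredients, but your direct formulation is shorter, avoids the iterated composition, and is slightly more careful about absolute values of determinants; the paper's incremental version has the advantage of making explicit that consecutive $\tilde f^{(t)}$ differ by a measure-preserving reparameterization, which is the picture illustrated in Figure~\ref{fig:figure_sequence}.
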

\begin{proof}
Let $\varphi^{(t)}=g^{(t-1)} \circ [g^{(t)}]^{-1}$. With this, it holds that $\tilde f^{(t)}= \tilde f^{(t-1)} \circ \varphi^{(t)}$.
It is easy to verify that $\varphi^{(t)}(\tilde m^{(t)})=\tilde m^{(t)}$ and
$$\nabla \varphi^{(t)}(x)=\big[A^{(t-1)}\big]^{-1}A^{(t)}=G^{(t)}\enspace,$$ 
where $G^{(t)}$ is the matrix computed by \texttt{computeG} via equation \eqref{eq:G} in iteration $t$. We obtain
\begin{align*}
    \tilde f^{(t)}_\mu(\tilde m^{(t)}) \,
        &= \sqrt[d]{\mu\left(\Big\{ x \,\Big|\, \tilde f^{(t)}(x) < \tilde f^{(t)}(\tilde m^{(t)}) \Big\}\right)} \\
        &= \sqrt[d]{\mu\left(\Big\{ x \,\Big|\, \tilde f^{(t-1)}\big(\varphi^{(t)}(x)\big) < \tilde f^{(t-1)}\big(\varphi^{(t)}(\tilde m^{(t)})\big) \Big\}\right)} \\
        &= \sqrt[d]{\mu\left(\Big\{ x \,\Big|\, \tilde f^{(t-1)}(y) < \tilde f^{(t-1)}(\tilde m^{(t)}) \text{ for } y = \varphi^{(t)}(x) \Big\}\right)} \\
        &\overset{(*)}{=} \sqrt[d]{\mu\left(\Big\{ y \,\Big|\, \tilde f^{(t-1)}(y) < \tilde f^{(t-1)}(\tilde m^{(t)}) \Big\}\right)} \\
        &= \tilde f^{(t-1)}_\mu(\tilde m^{(t)})
\end{align*}
for all $t$. The set changes from the left-hand-side to the right-hand-side
of equation (*). The equality of the Lebesgue measures of the two sets holds
because the matrix $G^{(t)}$ has unit determinant, and hence the
transformation $\varphi^{(t-1)}$ preserves the Lebesgue measure.

We can apply the decomposition argument via $\varphi^{(t)}$ iteratively and arrive at $\tilde f^{(t)}= \tilde f^{(0)} \circ \varphi^{(1)} \circ \cdots \circ \varphi^{(t)}$, where
\[
    \tilde f^{(0)}(m)= \left(f \circ [g^{(0)}]^{-1}\right)(m) =f(A^{(0)}(m-m^{(0)}) + m^{(0)})\enspace,
\]
follows from the definition of $g^{(t)}$ and starting-conditions of (1+1)-ES and (1+4)-HE-ES.
As $\nabla \varphi^{(t)}$ has unit determinant for all $t > 0$, it holds that $\tilde f^{(t)}_\mu(\tilde m^{(t)})= \tilde f^{(0)}_\mu(g^{(0)}(m^{(t)}))$.
Finally, the Lebesque-measure of  $\tilde f^{(0)}$ is given by:
\begin{align*}
    \tilde f^{(0)}_\mu(m) \,
        &= \sqrt[d]{\mu\left(\Big\{ x \,\Big|\, \tilde f^{(0)}(x) < \tilde f^{(0)}(m) \Big\}\right)} \\
        &= \sqrt[d]{\mu\left(\Big\{ x \,\Big|\, f(A^{(0)}(x-m^{(0)}) + m^{(0)}) < f([g^{(0)}]^{-1}(m)) \Big\}\right)}\\
        &= \sqrt[d]{\frac 1 {\det{A^{(0)}}}\mu\left(\Big\{ y \,\Big|\, f(y) < f([g^{(0)}]^{-1}(m)) \Big\}\right)}\\
        &= \frac 1 {\sqrt[d]{ \det{A^{(0)}}}}
        f_{\mu}([g^{(0)}]^{-1}(m)) \enspace.
\end{align*}
\end{proof}

\subsection{Adaptation of the Analysis of \citet{morinaga2019generalized}}

Before we state our main theorem, we need to recap the results of \citet{morinaga2019generalized} and how their proof is structured.
A key definition for this is the normalized step size
\begin{equation}\label{eq:orig_bar_sigma} \bar \sigma = \frac {\sigma} {f_\mu(m)}\end{equation}
which uses $f_\mu(m)$ as a measure of distance from the optimum.
Using this definition, the convergence proof for the (1+1)-ES with 1/5-success rule is structured into the following steps:
\begin{enumerate}
    \item It is proven that for any $0 < p_u < 1/5 < p_l < 1/2$ we can find normalized step-sizes $0 < \bar \sigma_l < \bar \sigma_u < \infty$ such, that for $\bar \sigma \in [\bar \sigma_l, \bar \sigma_u]$ the success-probability of the (1+1)-ES is $P\big(f(X) < f(m)\big) \in [p_u,p_l]$, where $X \sim{} \mathcal{N}(m,f_\mu(m)  \bar \sigma I)$ for all $m\in \mathbb{R}^d$ such, that $f(m) \leq f(m^{(0)})$. In other words,
    for any point that might get accepted during an optimization run,
    the success probability must be within $[p_u,p_l]$ when $\bar \sigma \in [\bar \sigma_l, \bar \sigma_u]$.
    \item \citet{morinaga2019generalized} now pick $l \leq \bar \sigma_l$ and $u \geq \bar \sigma_u$
    with $u/l \geq c_\sigma^{5/4}$ and some constant $v > 0$ to be quantified later to define the potential function
$$
    V(m,\bar \sigma)= \log f_\mu(m) + v \max\left\{0, 
    \log \frac {c_\sigma l}{\bar \sigma},
    \log \frac {c_\sigma^{\frac 14} \bar \sigma }{u}
    \right\}
$$
    It is clear that $V(m,\bar \sigma) \geq \log f_\mu(m)$ and thus, if $\Psi$-linear convergence is shown with the potential $\Psi = V$, then it also holds for $\Psi(m)=\log f_\mu(m)$. The second term penalizes $\bar \sigma \notin  [l, u]$ and thus allows to measure progress when $\sigma$ has too large or too small value so that progress in $f_\mu(m)$ is unlikely or very small.
    \item Using this potential, the expected truncated single-step progress is derived. To be more exact, we pick $\mathcal{A} > 0$ and define the sequence 
    \begin{equation}\label{eq:Y_orig}
    Y^{(t+1)}=Y^{(t)}+\max\left\{
    V(m^{(t+1)}, \bar \sigma^{(t+1)}) - V(m^{(t)}, \bar \sigma^{(t)}),
    -\mathcal{A}\right\},\quad Y^{(0)}=V(m^{(0)}, \bar \sigma^{(t)})\enspace.
    \end{equation}
    This bounds the single-step progress by $\mathcal{-A}$ and prevents technical difficulties in the proof due to very good steps which occur with low probability. With this sequence, the expected single-step progress is bounded by
    $$\Expectation\left[Y^{(t+1)}-Y^{(t)}\mid Y^{(t)} \right] \leq -\mathcal{B} \enspace.$$
    The result is obtained by maximizing the progress over $v$ and it is shown that for each $f$ there exists an interval $v \in (0,v_u)$ such, that $\mathcal{B} > 0$.
    \item Finally, with this bound in place, Theorem~1 in \cite{akimoto2018drift} is applied to bound convergence.
\end{enumerate}

Most important for us, the final step only depends on $\mathcal{A}$ and $\mathcal{B}$ and is thus independent of $V$. The third step in turn computes the expected progress of a single iteration, thus changing $V$ between two iterations does not affect this, as long as we ensure that the progress measured by a chosen $V^{(t)}$ relates to progress on $V(m,\bar{\sigma})$.
Our proof strategy is therefore the following.
We consider the (1+1)-ES in the setting of Corollary~\ref{corollary:functions}. We define the normalized step-size

$$\bar \sigma^{(t)}= \frac {\sigma^{(t)}}{\sqrt[d]{\det A^{(0)}} \tilde f^{(t)}_\mu(\tilde m^{(t)})}$$
as well as a sequence of potential-functions 

\begin{equation}\label{eq:Vt}
V^{(t)}(\tilde m,\bar \sigma)= \log \tilde f^{(t)}_\mu(\tilde m) - \log \sqrt[d]{\det A^{(0)}}+ v \max\left\{0, 
    \log \frac {c_\sigma l}{\bar \sigma},
    \log \frac {c_\sigma^{\frac 14} \bar \sigma }{u}
    \right\}\enspace.
\end{equation}
As due to Lemma~\ref{lemma:fmu_invariance} statement 2, $f_\mu(m^{(t)}) = \sqrt[d]{\det A^{(0)}} \tilde f^{(t)}_\mu(\tilde m^{(t)})$, our definition of $\bar \sigma$ coincides with equation~\eqref{eq:orig_bar_sigma}. Applying Lemma~\ref{lemma:fmu_invariance} to $V^{(t)}$, we obtain the properties
$$V^{(t)}(\tilde m^{(t)},\bar \sigma^{(t)})=V(m^{(t)},\bar \sigma^{(t)})\quad\text{and}\quad
V^{(t+1)}\left(\tilde m^{(t+1)}, 
\bar \sigma^{(t+1)}
\right) 
=V^{(t)}\left(\tilde m^{(t+1)},\bar \sigma^{(t+1)}\right)\enspace.$$
Thus, the sequence of truncated single step progress in \eqref{eq:Y_orig} coincides with
\begin{equation}\label{eq:Yt}
Y^{(t+1)}=Y^{(t)} +\max\left\{
    V^{(t)}(\tilde m^{(t+1)}, \bar \sigma^{(t+1)}) - V^{(t)}(\tilde m^{(t)}, \bar \sigma^{(t)}),
    -\mathcal{A}\right\},\quad Y^{(0)}=0\enspace.
\end{equation}
With this in place, we will find a feasible $v > 0$ and bound
$$\Expectation\left[Y^{(t+1)}-Y^{(t)}\mid Y^{(t)}\right] \leq -\mathcal{B}^{(t)} \leq -\mathcal{B} < 0 \enspace,$$
which produces the final result. We formalize this argument further in the proof of the final theorem:

\begin{theorem}
Consider minimization of the convex quadratic function
$$ f(x) = \frac12 (x - x^*)^T H (x - x^*) + f^* $$
with the (1+4)-HE-ES. Let $\Psi(m)=\log \lVert f_{\mu}(m)\rVert$. The sequence $\left(m^{(t)}\right)_{t \in \N}$
converges $\Psi$-linearly to $x^*$ with a convergence rate independent of $H$.
\end{theorem}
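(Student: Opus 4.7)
The plan is to leverage Corollary~\ref{corollary:functions} to recast the (1+4)-HE-ES dynamics as a (1+1)-ES acting on the dynamically changing sequence $\tilde f^{(t)}$, and then to invoke a uniform version of the Morinaga--Akimoto drift argument on the sequence $Y^{(t)}$ defined in equation~\eqref{eq:Yt}. The crucial identities $V^{(t)}(\tilde m^{(t)},\bar\sigma^{(t)}) = V(m^{(t)},\bar\sigma^{(t)})$ and $V^{(t+1)}(\tilde m^{(t+1)},\bar\sigma^{(t+1)}) = V^{(t)}(\tilde m^{(t+1)},\bar\sigma^{(t+1)})$ supplied by Lemma~\ref{lemma:fmu_invariance} ensure that the increment $Y^{(t+1)}-Y^{(t)}$ is exactly the one-step change of the Morinaga--Akimoto potential under a single (1+1)-ES iteration applied to the \emph{fixed} convex quadratic $\tilde f^{(t)}$ with Hessian $A^{(t)} H (A^{(t)})^T$. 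In particular, the analysis of $Y^{(t)}$ reduces, iteration by iteration, to step~(3) of the Morinaga--Akimoto framework applied to a quadratic whose Hessian varies with $t$.

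The main obstacle is that the per-step progress bound of step~(3) in their analysis is derived pointwise for a fixed quadratic, whereas here the target changes from iteration to iteration. I would address this by upgrading the pointwise bound to one that is uniform on a compact family of Hessians. First, Theorem~\ref{theorem:cma-convergence} gives $C^{(t)} \to \alpha H^{-1}$ almost surely, so $\nabla^2 \tilde f^{(t)} = A^{(t)} H (A^{(t)})^T \to \alpha I$. Moreover, Lemma~\ref{lemma:trace-reduction} guarantees that $\tr(\nabla^2 \tilde f^{(t)}) = \tr(C^{(t)} H)$ is monotonically non-increasing, so the whole sequence of Hessians lies in a compact set of positive definite matrices determined by $(C^{(0)}, H)$ alone. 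Reinspecting the constructions of $\bar\sigma_l, \bar\sigma_u, \mathcal{A}$, and $\mathcal{B}$ in Morinaga--Akimoto, each of them is built from success probabilities of Gaussian samples falling into sublevel sets of a quadratic and from expected log-progress integrals, both of which depend continuously on the Hessian. Continuity of these functionals on the compact set of reachable Hessians then yields a common choice of $v$, $l$, $u$, $\mathcal{A}$, and a constant $\mathcal{B}_* > 0$ such that
$$
\Expectation\bigl[Y^{(t+1)} - Y^{(t)} \bigm| Y^{(t)}\bigr] \leq -\mathcal{B}_*
$$
for every $t$ and every realization of the process.

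Once this uniform drift and the truncated jump size $\mathcal{A}$ are in place, Theorem~1 of \citet{akimoto2018drift} applied to $Y^{(t)}$ yields $\Expectation[T_V(\log\varepsilon)/(-\log\varepsilon)] \leq \log Q_*$. Because $V^{(t)}(\tilde m^{(t)},\bar\sigma^{(t)}) = V(m^{(t)},\bar\sigma^{(t)}) \geq \log f_\mu(m^{(t)})$, this bound transfers directly to $\Psi(m^{(t)}) = \log f_\mu(m^{(t)})$, giving $\Psi$-linear convergence in the sense of Definition~\ref{def:linear}. To establish $H$-independence of the rate, I would use Theorem~\ref{theorem:cma-convergence} again: for any neighborhood $U$ of $\alpha I$, the Hessians $\nabla^2 \tilde f^{(t)}$ enter $U$ after an almost surely finite random time, which contributes only a bounded additive term to the hitting time and hence vanishes in the normalized limit $\varepsilon \to 0$ used in Definition~\ref{def:linear}. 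Consequently, the asymptotic rate $\log Q_*$ can be taken arbitrarily close to the (1+1)-ES rate on the sphere function, which is manifestly independent of $H$.
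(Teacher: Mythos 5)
Your proposal is correct and follows essentially the same route as the paper: recast the run via Corollary~\ref{corollary:functions} and Lemma~\ref{lemma:fmu_invariance} as a (1+1)-ES on the changing quadratics $\tilde f^{(t)}$, make the per-step drift constants of \citet{morinaga2019generalized} uniform over a compact family of Hessians by continuity, and close with Theorem~1 of \citet{akimoto2018drift}. The one step you organize differently is how uniformity and $H$-independence are obtained. The paper fixes an arbitrary $\beta>1$, waits until $\kappa(\nabla^2\tilde f^{(t)})<\beta$ (guaranteed by Theorem~\ref{theorem:cma-convergence}), and works on the family $F(\alpha,\beta)$, so the constants $l,u,v,\mathcal{B}$ are $H$-independent by construction. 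You instead first use the monotone decrease of $\tr(C^{(t)}H)$ from Lemma~\ref{lemma:trace-reduction} together with the fixed determinant to confine \emph{all} reachable Hessians to a compact set depending on $(C^{(0)},H)$ --- a nice observation the paper's final proof does not exploit, which buys you a uniform drift constant valid from $t=0$ rather than only after a random time $T_0$ --- and you then need a second, limiting argument to strip the $H$-dependence out of the rate. That second step is where you are loosest: the entry time into a neighborhood of $\alpha I$ is a random variable, so ``contributes only a bounded additive term'' implicitly requires its expectation to be finite (this does follow from the exponential tail bound of Hajek's theorem invoked in the proof of Theorem~\ref{theorem:cma-convergence}, but it should be stated); the paper's arbitrary-$\beta$ device reaches the same conclusion more directly, and neither version carefully accounts for the contribution of the pre-$T_0$ iterations to the expected hitting time, so this is a shared imprecision rather than a gap specific to your argument.
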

\begin{proof}
We consider the (1+1)-ES in the setting of Corollary~\ref{corollary:functions} and thus obtain a state-trajectory $(\tilde m^{(t)}, \sigma^{(t)})_{t \in \N}$ with function-sequence $(\tilde f^{(t)})_{t \in \N}$ so, that $\nabla^2 \tilde f^{(t)} \rightarrow \alpha I$ and $\det(\nabla^2 \tilde f^{(t)})=\alpha^d$.
Pick $\beta > 1$ arbitrarily and consider the function space
$$F(\alpha, \beta)=\Big\{\tilde f(x)=f^* + (x-x^*)^TQ(x-x^*) \,\Big|\, x^* \in \mathbb{R}^d, \det(Q) = \alpha^d, \kappa(Q)\leq \beta \Big\}\enspace.$$
We note that for given $\alpha$ and $\beta$, the choice of matrices $Q$ in $F(\alpha, \beta)$ is restricted to a compact set. Therefore, a continuous function of $Q$ attains its infimum and supremum.
As $\nabla^2 \tilde f^{(t)}\rightarrow \alpha I$, we have $\kappa_t = \kappa(\nabla^2 \tilde f^{(t)})\rightarrow 1$ due to continuity. Therefore, there exists a $T_0 \in \mathbb{N}$ such, that
$\kappa_t < \beta$ and $\tilde f^{(t)} \in F(\alpha, \beta)$ for all $t > T_0$.

From now on, we will only consider $t > T_0$.
Proposition 4 and Proposition 12 in \citet{morinaga2019generalized} establish that for each $\tilde f \in F(\alpha,\beta)$ and each choice $0 < p_u < 1/5 < p_l < 1/2$ there exists a $0 <\bar \sigma_l <  \bar \sigma_u< \infty$ such, that step 1 is fulfilled. We can thus pick $0< l < u < \infty$ such, that $l < \bar \sigma_l < \bar \sigma_u < u $ for all $\tilde f \in F(\alpha,\beta)$.
 
With this choice of $l$ and $u$ and $v>0$, we can define $V^{(t)}$ and $Y^{(t)}$
as in equations \eqref{eq:Vt} and \eqref{eq:Yt}, respectively. With chosen $\mathcal{A} > 0$, and $v > 0$ sufficiently small, Proposition 6 in \citet{morinaga2019generalized} gives a bound on the expected single-step progress of
$$\Expectation\left[Y^{(t+1)}-Y^{(t)}\mid Y^{(t)}\right] < -\mathcal{B}^{(t)}\enspace.$$
While the bound $\mathcal{B}^{(t)}$ is obtained for a specific $v^{(t)} > 0$, \citet{morinaga2019generalized} show that we still obtain positive progress for $0 < v \leq v^{(t)}$. As $v^{(t)}> 0$ is a continuous function of $\kappa(\nabla^2 \tilde f^{(t)})$, it attains its minimum within the set $F(\alpha,\beta)$ and therefore we pick $v = \inf_{t \in \N} v^{(t)} > 0$. Let $\mathcal{B}_v^{(t)} > 0$ denote the progress rates obtained for this choice of $v$.
Again, due to continuity of $\mathcal{B}_v^{(t)}$ as a function of $\tilde f \in F(\alpha,\beta)$, we can define $\mathcal{B} = \inf_{t \in \N} \mathcal{B}_v^{(t)}> 0$.

Finally, with $\mathcal{A}$ and $\mathcal{B}$ in place, we can apply Theorem~1 in \cite{akimoto2018drift} to obtain linear convergence. Since $\beta$ was chosen independently of $H$, the rate of convergence is independent of the problem instance and its difficulty~$\kappa(H)$.

\end{proof}

Our result is the first proof of linear convergence of a CMA-based elitist ES, and the first proof of linear convergence of any ES at a rate that is independent of $H$. The result is of interest in a broader context, because it can naturally be extended to other CMA-algorithms as the proof itself only uses two properties: the determinant of $C^{(t)}$ is constant and $C^{(t)} \rightarrow \alpha H^{-1}$. The first condition poses no difficulties for algorithm design, as we can always use a matrix with normalized variance for sampling, i.e. sample offspring from $\Normal\big(m^{(t)}, (\sigma{(t)})^2 \cdot C^{(t)} / \sqrt[d]{\det C^{(t)}}\big)$. Therefore, our proof  can also be applied to the covariance-matrix adaptation algorithm proposed by \citet{stich2016variable} when applied to the (1+1)-ES. We expect similar results for a hybrid-algorithm that could be constructed from an (1+1)-ES using the BOBYQA approximation of the Hessian matrix \citep{powell2009bobyqa}.

\section{Conclusion}

We have established that the covariance matrix update of the recently
proposed Hessian Estimation Evolution Strategy is stable. It makes the
covariance matrix converge to a multiple of the inverse Hessian of a
convex quadratic objective function, and even in face of randomly
sampled offspring the covariance matrix cannot degrade. This strong
guarantee highlights that the update mechanism is very different from
CMA-ES and similar algorithms. It also allows us to derive a strong
convergence speed guarantee, namely linear convergence of a variable
metric evolution strategy at the optimal convergence rate, in the sense
that the convergence speed coincides with the speed of the same
algorithm without covariance matrix adaptation applied to the sphere
function. To the best of our knowledge, this is the first result of
this type for a variable metric evolution strategy.

{
\small

}


\begin{thebibliography}{}

\bibitem[Akimoto et~al., 2018]{akimoto2018drift}
Akimoto, Y., Auger, A., and Glasmachers, T. (2018).
\newblock Drift theory in continuous search spaces: Expected hitting time of
  the (1+1)-es with 1/5 success rule.
\newblock In {\em Proceedings of the Genetic and Evolutionary Computation
  Conference (GECCO)}. ACM.

\bibitem[Akimoto et~al., 2010]{akimoto2010bidirectional}
Akimoto, Y., Nagata, Y., Ono, I., and Kobayashi, S. (2010).
\newblock Bidirectional relation between {CMA} evolution strategies and natural
  evolution strategies.
\newblock In {\em International Conference on Parallel Problem Solving from
  Nature}, pages 154--163. Springer.

\bibitem[Auger, 2005]{auger2005convergence}
Auger, A. (2005).
\newblock {Convergence results for the $(1,\lambda)$-SA-ES using the theory of
  $\varphi$-irreducible Markov chains}.
\newblock {\em Theoretical Computer Science}, 334(1--3):35--69.

\bibitem[Beyer and Sendhoff, 2017]{beyer2017simplify}
Beyer, H.-G. and Sendhoff, B. (2017).
\newblock Simplify your covariance matrix adaptation evolution strategy.
\newblock {\em IEEE Transactions on Evolutionary Computation}, 21(5):746--759.

\bibitem[Doerr et~al., 2011]{doerr2011sharp}
Doerr, B., Fouz, M., and Witt, C. (2011).
\newblock Sharp bounds by probability-generating functions and variable drift.
\newblock In {\em Proceedings of the 13th annual conference on Genetic and
  evolutionary computation}, pages 2083--2090.

\bibitem[Glasmachers and Krause, 2020]{HEES}
Glasmachers, T. and Krause, O. (2020).
\newblock The hessian estimation evolution strategy.
\newblock In {\em Parallel Problem Solving from Nature (PPSN XVII)}. Springer.

\bibitem[Glasmachers et~al., 2010]{glasmachers2010xNES}
Glasmachers, T., Schaul, T., Sun, Y., Wierstra, D., and Schmidhuber, J. (2010).
\newblock {Exponential Natural Evolution Strategies}.
\newblock In {\em Genetic and Evolutionary Computation Conference (GECCO)},
  pages 393--400. ACM.

\bibitem[Hajek, 1982]{hajek1982hitting}
Hajek, B. (1982).
\newblock Hitting-time and occupation-time bounds implied by drift analysis
  with applications.
\newblock {\em Advances in Applied probability}, pages 502--525.

\bibitem[Hansen and Ostermeier, 2001]{hansen:2001}
Hansen, N. and Ostermeier, A. (2001).
\newblock Completely derandomized self-adaptation in evolution strategies.
\newblock {\em Evolutionary Computation}, 9(2):159--195.

\bibitem[Igel et~al., 2007]{igel2007covariance}
Igel, C., Hansen, N., and Roth, S. (2007).
\newblock Covariance matrix adaptation for multi-objective optimization.
\newblock {\em Evolutionary computation}, 15(1):1--28.

\bibitem[J{\"a}gersk{\"u}pper, 2006]{jaegerskuepper2006quadratic}
J{\"a}gersk{\"u}pper, J. (2006).
\newblock How the (1+1)-es using isotropic mutations minimizes positive
  definite quadratic forms.
\newblock {\em Theoretical Computer Science}, 361(1):38--56.

\bibitem[Kern et~al., 2004]{kern:2004}
Kern, S., M{\"u}ller, S.~D., Hansen, N., B{\"u}che, D., Ocenasek, J., and
  Koumoutsakos, P. (2004).
\newblock Learning probability distributions in continuous evolutionary
  algorithms--a comparative review.
\newblock {\em Natural Computing}, 3(1):77--112.

\bibitem[Krause and Glasmachers, 2015]{krause2015xCMAES}
Krause, O. and Glasmachers, T. (2015).
\newblock A {CMA-ES} with multiplicative covariance matrix updates.
\newblock In {\em Proceedings of the Genetic and Evolutionary Computation
  Conference (GECCO)}.

\bibitem[Lagarias et~al., 2012]{lagarias2012convergence}
Lagarias, J.~C., Poonen, B., and Wright, M.~H. (2012).
\newblock Convergence of the restricted nelder--mead algorithm in two
  dimensions.
\newblock {\em SIAM Journal on Optimization}, 22(2):501--532.

\bibitem[Morinaga and Akimoto, 2019]{morinaga2019generalized}
Morinaga, D. and Akimoto, Y. (2019).
\newblock Generalized drift analysis in continuous domain: linear convergence
  of (1+ 1)-es on strongly convex functions with lipschitz continuous
  gradients.
\newblock In {\em Proceedings of the 15th ACM/SIGEVO Conference on Foundations
  of Genetic Algorithms}, pages 13--24.

\bibitem[Ollivier et~al., 2017]{2017OllivierIGO}
Ollivier, Y., Arnold, L., Auger, A., and Hansen, N. (2017).
\newblock Information-geometric optimization algorithms: A unifying picture via
  invariance principles.
\newblock {\em Journal of Machine Learning Research}, 18(18):1--65.

\bibitem[Powell, 2008]{NEWUOA}
Powell, M.~J. (2008).
\newblock Developments of newuoa for minimization without derivatives.
\newblock {\em IMA journal of numerical analysis}, 28(4):649--664.

\bibitem[Powell, 2009]{powell2009bobyqa}
Powell, M.~J. (2009).
\newblock The bobyqa algorithm for bound constrained optimization without
  derivatives.
\newblock {\em Cambridge NA Report NA2009/06, University of Cambridge,
  Cambridge}, pages 26--46.

\bibitem[Rechenberg, 1973]{rechenberg1973evolutionsstrategie}
Rechenberg, I. (1973).
\newblock Evolutionsstrategie -- optimierung technischer systeme nach
  prinzipien der biologischen information.
\newblock {\em Stuttgart-Bad Cannstatt: Friedrich Frommann Verlag}.

\bibitem[Stich et~al., 2016]{stich2016variable}
Stich, S.~U., M{\"u}ller, C.~L., and G{\"a}rtner, B. (2016).
\newblock Variable metric random pursuit.
\newblock {\em Mathematical Programming}, 156(1-2):549--579.

\bibitem[Van~der Vorst, 2003]{vandervorst2003iterative}
Van~der Vorst, H.~A. (2003).
\newblock {\em Iterative Krylov methods for large linear systems}, volume~13.
\newblock Cambridge University Press.

\bibitem[Wierstra et~al., 2014]{wierstra2014natural}
Wierstra, D., Schaul, T., Glasmachers, T., Sun, Y., Peters, J., and
  Schmidhuber, J. (2014).
\newblock Natural evolution strategies.
\newblock {\em Journal of Machine Learning Research}, 15:949--980.

\end{thebibliography}
\end{document}